\numberwithin{equation}{section}
\newtheorem{thm}{Theorem}[section]
\newtheorem{lem}[thm]{Lemma}
\newtheorem{cor}[thm]{Corollary}
\newdefinition{rmk}{Remark}
\newproof{pf}{Proof}
\newproof{pot}{Proof of Theorem \ref{thm2}}
\newcommand{\diag}{\operatorname{diag}}
\newcommand{\mb}{\mathbf}
\newcommand{\mbb}{\mathbb}
\newcommand{\re}{\operatorname{Re}}
\newcommand{\tr}{\operatorname{tr}}
\journal{Advances in Applied Mathematics}
\begin{document}

\begin{frontmatter}



\title{Kibble--Slepian Formula and Generating Functions for $2$D Polynomials}


\author[ksu,ucf]{Mourad E. H. Ismail\fnref{fn1}}
\ead{mourad.eh.ismail@gmail.com}

\author[nafu]{Ruiming Zhang\corref{cor1}\fnref{fn2}}
\ead{ruimingzhang@yahoo.com}

\cortext[cor1]{Corresponding author}
\fntext[fn1]{Research supported by a grant from the DSFP program at King Saud University and by the National Plan for Science, Technology and Innovation (MAARIFAH), King Abdelaziz City for Science and Technology, Kingdom of Saudi Arabia, Award Number 14-MAT623-02.}
\fntext[fn2]{Research partially supported by National Science Foundation of China, Grant No. 11371294.}

\address[ksu]{Department of Mathematics, King Saud University, Riyadh, Saudi Arabia}
\address[ucf]{Department of Mathematics, University of Central Florida, Orlando, Florida 32816, USA}
\address[nafu]{College of Science, Northwest A\&F University, Yangling, Shaanxi 712100, P. R. China}

\begin{abstract}
We prove a generalization of the Kibble--Slepian formula (for Hermite polynomials) and its unitary analogue involving the $2$D Hermite polynomials recently proved in \cite{Ism4}. We derive integral representations for the  $2$D Hermite polynomials which are of independent interest. Several new generating functions for $2$D $q$-Hermite polynomials will also be given.
\end{abstract}

\begin{keyword}

Hermite polynomials \sep $2$D Hermite polynomials \sep $2$D $q$-Hermite polynomials \sep Poisson kernels \sep positivity of kernels \sep integral operators \sep multilinear generating functions \sep Kibble--Slepian formula.


\MSC[2010] Primary 33C45 \sep Secondary 42C20.
\end{keyword}

\end{frontmatter}


\section{Introduction}\label{sec1}

The complex Hermite polynomials $\left\{H_{m,n}\left(z_1,z_2\right)\right\}_{m,n=0}^{\infty}$ may be defined by
\begin{equation}
H_{m,n}\left(z_1,z_2\right)=\sum_{k=0}^{m\wedge n}(-1)^{k}k!\,\binom{m}{k}\binom{n}{k}\,z_1^{m-k}z_2^{n-k}.
\label{eq:1.1}
\end{equation}
The polynomials $\left\{H_{m,n}\left(z,\overline{z}\right)\right\}_{m,n=0}^{\infty}$ are orthogonal on $\mbb{R}^2$ with respect to $e^{-x^2-y^2}$ and have the generating function
\begin{equation}
\sum_{m,\,n=0}^{\infty}\,H_{m,n}\left(z_1,z_2\right)\frac{u^m\,v^n}{m!\,n!}=e^{uz_1+vz_2-uv}.
\label{eqGFHmn}
\end{equation}
They seem to have been considered first by Ito \cite{Ito} in his study of complex multiple Wiener integrals. Recently they were used in \cite{Ali:Bag:Hon} to study Landau levels and were applied in \cite{Thi:Hon:Krz} to
coherent states, and in \cite{Wun,Wun2} to quantum optics and quasiprobabilities, respectively. See also \cite{Gha,Gha2,Cot:Gaz:Gor}. The reference \cite{Int:Int} deals with the spectral properties of the Cauchy transform and the polynomials $\left\{H_{m,n}(z,\overline{z})\right\}$ also appear in this context. The polynomials $\left\{H_{m,n}\left(z,\overline{z}\right)\right\}_{m,n=0}^{\infty}$ are essentially the same polynomials as in the monograph \cite[(2.6.6)]{Dun:Xu} by Dunkl and Xu.

The Kibble--Slepian formula is Equation \eqref{eq:1.3} below. It was first proved by Kibble in 1945 \cite{Kib} and later by Slepian \cite{Sle}. Louck \cite{Lou} gave a proof using Boson operators while Foata \cite{Foa}
gave a purely combinatorial proof. Each proof brings in a new point of view.

\begin{thm}\label{thm:thm1}
Let $S=\left(s_{j,k}\right)_{j,k=1}^N$ be an $N\times N$ real symmetric matrix with the Frobenius norm
\begin{equation}
\|S\|^2=\sum_{j,k=1}^N\left|s_{j,k}\right|^2.
\label{eq:1.2}
\end{equation}
Assume that $\|S\|<1$, $I_N$ being an identity matrix of size $N$, and $X$ being an $N\times1$ matrix. Then
\begin{equation}
\label{eq:1.3}
\begin{gathered}
\det\left(I_N+S\right)^{-\frac{1}{2}}\exp\left(X^TS\left(I_N+S\right)^{-1}X\right)\\
=\sum_{K}\left(\prod_{1\le m\le n\le N}\frac{\left(s_{m,n}\right)^{k_{m,n}}}
{2^{k_{m,n}}k_{m,n}!}\right)2^{-\tr\left(K\right)}H_{k_1}\left(x_1\right)\dotsb H_{k_N}\left(x_N\right),
\end{gathered}
\end{equation}
where $X=\left(x_1,x_2,\dotsc,x_N\right)^T$, $K=\left(k_{m,n}\right)_{m,n=1}^{N}$, $k_{m,n}=k_{n,m}$, $1\le m,n\le N$ and
\begin{equation}
\tr\left(K\right)=\sum_{j=1}^{N}k_{j,j},\quad k_{\ell}=k_{\ell,\ell}+\sum_{j=1}^{N}k_{\ell,j},\qquad\ell=1,\dots,N.
\label{eq:1.4}
\end{equation}
In \eqref{eq:1.3}, $\sum_{K}$ denotes the $\frac{n\left(n+1\right)}{2}$ fold sum over all nonnegative integers $k_{m,n}=0,1,\dotsc$ for all positive integers $m,n$ such that $1\le m\le n\le N$.
\end{thm}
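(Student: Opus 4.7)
My plan is to start from the right-hand side and collapse it to the left-hand side using the classical integral representation
\[
H_n(x)=\frac{2^n}{\sqrt{\pi}}\int_{-\infty}^{\infty}(x+iu)^n\,e^{-u^2}\,du,
\]
which converts products of Hermite polynomials into a Gaussian integral against a pure monomial. Substituting this into $\prod_{\ell}H_{k_\ell}(x_\ell)$ gives a factor $\prod_\ell (x_\ell+iu_\ell)^{k_\ell}$ inside an $N$-fold integral with weight $\pi^{-N/2}e^{-\|U\|^2}$, and the exponents of $2$ pulled out will combine with the $2^{-k_{m,n}}$ and $2^{-\tr(K)}$ in the summand so that every factor of $2$ cancels except the natural $2s_{m,n}$ that appears for off-diagonal entries.

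Next I would interchange the sum and the integral (justified below) and reorganize the outer product using the identity $k_\ell = 2k_{\ell,\ell}+\sum_{j\ne\ell}k_{\ell,j}$, so that
\[
\prod_\ell (x_\ell+iu_\ell)^{k_\ell}=\prod_m (x_m+iu_m)^{2k_{m,m}}\prod_{m<n}\bigl[(x_m+iu_m)(x_n+iu_n)\bigr]^{k_{m,n}}.
\]
Because the matrix entries $k_{m,n}$ for $m\le n$ are summed independently, the $K$-sum then factors as a product of single-index exponential series, yielding
\[
\exp\!\Bigl(\sum_m s_{m,m}(x_m+iu_m)^2+\sum_{m<n}2s_{m,n}(x_m+iu_m)(x_n+iu_n)\Bigr)=\exp\bigl((X+iU)^T S(X+iU)\bigr).
\]
Thus the right-hand side becomes the single Gaussian integral $\pi^{-N/2}\int_{\mbb{R}^N}\exp\!\bigl(-U^TU+(X+iU)^TS(X+iU)\bigr)\,dU$.

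Finally I would expand $(X+iU)^TS(X+iU)=X^TSX+2iU^TSX-U^TSU$ and complete the square in $U$ against $A:=I_N+S$. Because $\|S\|<1$ in Frobenius norm forces the eigenvalues of $S$ to lie in $(-1,1)$, $A$ is positive definite and the Gaussian integral evaluates to $\pi^{N/2}\det(A)^{-1/2}\exp(-X^TSA^{-1}SX)$. The algebraic identity $S-SA^{-1}S=S(I_N+S)^{-1}$ then delivers the left-hand side. The main obstacles are bookkeeping---tracking the powers of $2$ and checking the exponent matching $k_\ell=2k_{\ell,\ell}+\sum_{j\ne\ell}k_{\ell,j}$---and the analytic justification of exchanging sum and integral, which requires absolute convergence of the $K$-sum inside the integral. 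The latter follows once one bounds $|(x+iu)^k|\le(|x|+|u|)^k$, writes the $K$-sum of absolute values as the analogous exponential with $|s_{m,n}|$ in place of $s_{m,n}$, and observes that the hypothesis $\|S\|<1$ guarantees Gaussian decay of the resulting integrand.
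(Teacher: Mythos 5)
Your proposal is correct and is essentially the paper's own argument run in reverse: the paper writes the left-hand side as the Gaussian integral $\pi^{-N/2}\int_{\mathbb{R}^N}\exp\left(-Y^{T}\left(I_N+S\right)Y+2iX^{T}Y\right)dY$, expands $\exp\left(-Y^{T}SY\right)$ as a multinomial series in the $s_{m,n}$ (Lemma \ref{lem:1}) and identifies each moment integral as a Hermite polynomial via \eqref{eq:1.8}, while you insert the equivalent representation $H_n(x)=\frac{2^n}{\sqrt{\pi}}\int_{-\infty}^{\infty}(x+iu)^n e^{-u^2}du$ into the right-hand side, resum to $\exp\left((X+iU)^{T}S(X+iU)\right)$, and evaluate the same Gaussian integral by completing the square, using $S-S\left(I_N+S\right)^{-1}S=S\left(I_N+S\right)^{-1}$. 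Your power-of-two bookkeeping and your Fubini justification (the matrix of absolute values $\left(\left|s_{m,n}\right|\right)$ has the same Frobenius norm as $S$, hence spectral norm $<1$, so the dominating integrand retains Gaussian decay) are sound.
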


It must be noted that the proofs by Louck \cite{Lou} and Slepian \cite{Sle} assume $S$ is symmetric and conclude that the expansion in \eqref{eq:1.3} holds for $\|S\|<1$. On the other hand the combinatorial version by Foata \cite{Foa} makes no assumptions on $S$ but assumes the diagonal elements $h_{j,j}$ vanish, and concludes that the expansion \eqref{eq:1.3} holds as a formal power series.

In \cite{Ism4} Ismail proved a similar theorem for the complex Hermite polynomials. His result is essentially the following theorem.

\begin{thm}\label{thm:thm2}
Let $W=\left(w_1,w_2,\cdots,w_{N}\right)^T$, and $H=\left(h_{m,n}\right)_{m,n=1}^N$ be an $N\times N$ Hermitian matrix with $\|H\|<1$ in Frobenius norm, and $I_N$ is an $N\times N$ identity matrix. Then
\begin{equation}
\label{eq:1.5}
\begin{gathered}
\det\left(I_N+H\right)^{-1}\exp\left(W^*H\left(I_N+H\right)^{-1}W\right) \\
=\sum_{K}\prod_{1\le m,n\le N}\frac{\left(h_{m,n}\right)^{k_{m,n}}}{k_{m,n}!}H_{r_1,c_1}
\left(\overline{w_1},w_1\right)\dotsm H_{r_N,c_N}\left(\overline{w_N},w_N\right),
\end{gathered}
\end{equation}
where $K=\left(k_{m,n}\right)_{m,n=1}^N$ is a general matrix with nonnegative integer entries, $c_n$ is the sum of the elements of $K$ in column $n$ and $r_m$ is the sum of the elements of $K$ in row $m$, that is
\begin{equation}
c_{n}=\sum_{j=1}^Nk_{j,n},\qquad r_{m}=\sum_{\ell=1}^Nk_{m,\ell}.
\label{eq:1.6}
\end{equation}
\end{thm}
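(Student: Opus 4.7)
The plan is to evaluate the left-hand side of \eqref{eq:1.5} as a complex Gaussian integral and identify the coefficients in its expansion as $2$D Hermite polynomials via a single-variable integral representation. Because $H$ is Hermitian and the Frobenius norm dominates the spectral radius, the hypothesis $\|H\|<1$ places the eigenvalues of $H$ in $(-1,1)$, so $I_N+H$ is positive definite. The standard complex Gaussian identity
\[
\int_{\mathbb{C}^N}e^{-Z^*AZ+B^*Z+Z^*C}\,\frac{d^{2N}Z}{\pi^N}=\det(A)^{-1}\,e^{B^*A^{-1}C}
\]
applied with $A=I_N+H$, $B=HW$, $C=W$ rewrites the left-hand side as
\[
\int_{\mathbb{C}^N}\exp\!\bigl(-Z^*Z+Z^*W+(W-Z)^*HZ\bigr)\frac{d^{2N}Z}{\pi^N},
\]
since $B^*A^{-1}C=(HW)^*(I_N+H)^{-1}W=W^*H(I_N+H)^{-1}W$ using $H^*=H$.

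I would next Taylor-expand the mixing factor as
\[
e^{(W-Z)^*HZ}=\prod_{m,n=1}^{N}\sum_{k_{m,n}=0}^{\infty}\frac{h_{m,n}^{k_{m,n}}\,(\overline{w_m}-\overline{z_m})^{k_{m,n}}z_n^{k_{m,n}}}{k_{m,n}!},
\]
which produces precisely the sum over matrices $K=(k_{m,n})$ and the prefactor $\prod h_{m,n}^{k_{m,n}}/k_{m,n}!$ appearing on the right-hand side of \eqref{eq:1.5}. The bound $|h_{m,n}|\le\|H\|<1$ together with the Gaussian factor $e^{-Z^*Z+Z^*W}$ guarantees absolute convergence, so sum and integral may be interchanged. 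After the interchange, the integrand factors along the index:
\[
\prod_{m,n}(\overline{w_m}-\overline{z_m})^{k_{m,n}}z_n^{k_{m,n}}=\prod_{m=1}^{N}(\overline{w_m}-\overline{z_m})^{r_m}\prod_{n=1}^{N}z_n^{c_n},
\]
with $r_m$ and $c_n$ as in \eqref{eq:1.6}, and the $N$-dimensional integral decouples into a product of $N$ one-dimensional integrals, one for each coordinate.

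The remaining step is the one-variable integral representation
\[
H_{r,c}(\overline{w},w)=\int_{\mathbb{C}}e^{-|z|^2+\overline{z}w}\,(\overline{w}-\overline{z})^{r}z^{c}\,\frac{dA(z)}{\pi},
\]
which I would derive by forming the bivariate generating function $\sum_{r,c\ge 0}u^r v^c/(r!c!)$ of its right-hand side. Interchanging sum and integral collapses it to $e^{u\overline{w}}\int_{\mathbb{C}}e^{-|z|^2+(w-u)\overline{z}+vz}\,dA(z)/\pi=e^{u\overline{w}+(w-u)v}=e^{u\overline{w}+vw-uv}$, using the elementary evaluation $\int_{\mathbb{C}}e^{-|z|^2+\alpha\overline{z}+\beta z}\,dA(z)/\pi=e^{\alpha\beta}$. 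This matches the generating function \eqref{eqGFHmn} of $H_{r,c}(\overline{w},w)$ exactly, so the integral representation follows by comparing coefficients. Feeding it back into each of the $N$ factored one-dimensional integrals yields the right-hand side of \eqref{eq:1.5}.

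The main obstacle is essentially bookkeeping: verifying that the multi-index product $\prod_{m,n}$ regroups correctly into the row sums $r_m$ and column sums $c_n$, and justifying the termwise integration using the Gaussian tails and the strict bound $\|H\|<1$. The substantive content is the one-variable integral representation, and this is the step I would develop most carefully; once in hand, it reduces the theorem to a single Gaussian calculation, and in fact it delivers one of the integral representations for $2$D Hermite polynomials advertised in the abstract.
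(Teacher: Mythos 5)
Your proof is correct, and while it shares the paper's overall philosophy (represent the left-hand side as a Gaussian integral, expand the coupling in powers of the $h_{m,n}$, integrate term by term), the implementation is genuinely different. The paper first rewrites $W^*H(I_N+H)^{-1}W=W^*W-W^*(I_N+H)^{-1}W$, represents $\det(I_N+H)^{-1}e^{-W^*(I_N+H)^{-1}W}$ by the characteristic-function integral $\pi^{-N}\int_{\mathbb{R}^{2N}}\exp\left(-Z^*(I_N+H)Z+2i\re\left(W^*Z\right)\right)$, expands $e^{-Z^*HZ}$ (Lemma \ref{lem:2}) to obtain the monomials $\overline{z_j}^{\,r_j}z_j^{c_j}$, and identifies each coordinate integral through the oscillatory representation \eqref{eq:1.9}, the weights $e^{-|w_j|^2}$ then cancelling the factor $e^{W^*W}$. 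You instead complete the square with the non-conjugate sources $B=HW$, $C=W$ (legitimate for the Hermitian positive definite matrix $A=I_N+H$), expand the shifted coupling $e^{(W-Z)^*HZ}$, and use the Bargmann-type representation $H_{r,c}\left(\overline w,w\right)=\pi^{-1}\int_{\mathbb{C}}e^{-|z|^2+\overline z w}\left(\overline w-\overline z\right)^r z^c\,dA(z)$, which you correctly derive by matching the generating function \eqref{eqGFHmn}; this produces the polynomials directly, with no oscillatory kernel and no weight bookkeeping, and is itself a representation not stated in the paper. The one step to tighten is the Fubini justification: the entrywise bound $|h_{m,n}|\le\|H\|<1$ alone does not dominate the expanded series, since $\sum_{m,n}|h_{m,n}|\,|z_m|\,|z_n|$ can be as large as $N\max_{m,n}|h_{m,n}|\cdot\|Z\|^2$; instead use Cauchy--Schwarz with the Frobenius hypothesis, $\sum_{m,n}|h_{m,n}|\,|w_m-z_m|\,|z_n|\le\|H\|\bigl(\|Z\|^2+\|W\|\,\|Z\|\bigr)$, so the dominating integrand is $O\bigl(e^{-(1-\|H\|)\|Z\|^2+c\|Z\|}\bigr)$ and termwise integration is justified (the paper is equally terse at its corresponding step).
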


In this paper we prove the following stronger result without the assumption that $H\in \mathbb{C}^{N\times N}$ is Hermitian.

\begin{thm}\label{thm2b}
Following the notations in Theorem \ref{thm:thm2}, we let $W=\left(w_1,\dotsc,w_N\right)^T\in\mbb{C}^N$, $H\in\mbb{C}^{N\times N}$, $\|H\|_{\infty}=\max\limits_{1\le j,\ell\le N}\left|h_{j,\ell}\right|$ and $B=\left\{H:\|H\|_{\infty}<\frac1N\right\}$. Then the series
$$
\sum_{K}\prod_{1\le m,n\le N}\frac{\left(h_{m,n}\right)^{k_{m,n}}}{k_{m,n}!}
H_{r_{1},c_{1}}\left(\overline{w_{1}},w_{1}\right)\dotsm H_{r_{N},c_{N}}\left(\overline{w_{N}},w_{N}\right)
$$
converges absolutely and uniformly for $W$ in any compact subset of $\mbb{C}^N$ and $H$ in any compact subset of $B$.
 	
Given $\delta_{j,k}>0$, $j,k=1,\dotsc,N$ and a Hermitian matrix $H_0=\left(h_{j,k}^{(0)}\right)_{j,k=1}^N\in B$, let
\begin{equation}
D\left(H_0,\delta\right)=\left\{H:\left|h_{j,j}-h_{j,j}^{(0)}\right|<\delta_{j,j},\left|u_{\ell,k}-u_{\ell,k}^{(0)}\right|
<\delta_{\ell,k},\left|v_{\ell,k}-v_{\ell,k}^{(0)}\right|<\delta_{\ell,k}\right\},
\label{eq:a-3}
\end{equation}
where $1\le j,k,\ell\le N$, $\ell<k$ and
\begin{equation}
u_{\ell,k}=\frac{h_{\ell,k}+h_{k,\ell}}{2},\,
v_{\ell,k}=\frac{h_{\ell,k}-h_{k,\ell}}{2i},\,
u_{\ell,k}^{(0)}=\frac{h_{\ell,k}^{(0)}+h_{k,\ell}^{(0)}}{2},\,
v_{\ell,k}^{(0)}=\frac{h_{\ell,k}^{(0)}-h_{k,\ell}^{(0)}}{2i}.
\label{eq:a-4}
\end{equation}
If $D\left(H_0,\delta\right)\subset B$, then
\begin{equation}
\begin{gathered}
\exp\left(W^*H\left(I_N+H\right)^{-1}W\right)=\det\left(I_N+H\right) \\
\times\sum_{K}\prod_{1\le m,n\le N}\frac{\left(h_{m,n}\right)^{k_{m,n}}}{k_{m,n}!}
H_{r_1,c_1}\left(\overline{w_1},w_1\right)\cdots H_{r_N,c_N}\left(\overline{w_N},w_N\right)
\end{gathered}
\label{eq:a-2}
\end{equation}
holds for all $W\in\mbb{C}^N$ and $H\in D\left(H_0,\delta\right)$. In particular, it is not hard to see that $D\left(\left(0\right)_{j,k=1}^{N},\left(\frac{1}{2N}\right)_{j,k=1}^{N}+\frac{I_{N}}{2N}\right)\subset B$.
\end{thm}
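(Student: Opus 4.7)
The plan is to proceed in two stages: first establish the absolute and uniform convergence claimed in the theorem, then derive the identity \eqref{eq:a-2} on $D(H_0,\delta)$ by analytic continuation from the Hermitian case already handled by Theorem~\ref{thm:thm2}. For convergence, my first step would be a pointwise bound of the form $|H_{m,n}(\bar w,w)|\le \sqrt{m!\,n!}\,\varphi(m,n,w)$ in which $\varphi$ is locally bounded in $w$ and grows at most sub-exponentially in $\sqrt{m+n}$; such a bound follows from the Laguerre representation $H_{m,n}(\bar w,w)=(-1)^{m\wedge n}(m\wedge n)!\,w^{n-m}L_{m\wedge n}^{|n-m|}(|w|^2)$ (for $m\le n$) combined with Szeg\H{o}'s inequality $|L_n^{\alpha}(x)|\le\binom{n+\alpha}{n}e^{x/2}$ for $\alpha,x\ge 0$, or equivalently from an optimized Cauchy estimate based on the generating function \eqref{eqGFHmn}. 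The multinomial inequality $r_\ell!\le N^{r_\ell}\prod_{j}k_{\ell,j}!$ and its column analogue give $\prod_{\ell=1}^{N}\sqrt{r_\ell!\,c_\ell!}\le N^{|K|}\prod_{m,n}k_{m,n}!$ with $|K|=\sum_{m,n}k_{m,n}$. Substituting, the factorials $k_{m,n}!$ in the denominators cancel and the series is dominated by $\sum_{n=0}^{\infty}n^{D}e^{c\sqrt n}(N\|H\|_\infty)^{n}\binom{n+N^2-1}{N^2-1}$, which converges geometrically whenever $N\|H\|_\infty<1$ and does so uniformly for $W$ in a compact subset of $\mbb{C}^N$ and $H$ in a compact subset of $B$.

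For the identity, I would view $\mbb{C}^{N\times N}$ as $\mbb{C}^{N^2}$ through the coordinates $(h_{j,j})_{j}$ together with $(u_{\ell,k},v_{\ell,k})_{\ell<k}$ from \eqref{eq:a-4}; the map $H\mapsto(h_{j,j},u_{\ell,k},v_{\ell,k})$ is a $\mbb{C}$-linear bijection under which $D(H_0,\delta)$ is a genuine complex polydisc around the point corresponding to $H_0$. On this polydisc both sides of \eqref{eq:a-2} are holomorphic: the right side because the first step exhibits it as a locally uniform limit of polynomials in the coordinates, and the left side because $\|H\|\le N\|H\|_\infty<1$ on $B$ forces $I_N+H$ to be invertible, so that the entries of $(I_N+H)^{-1}$ are rational in the coordinates. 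The real slice of $D(H_0,\delta)$---the subset on which all coordinates take real values---coincides precisely with the Hermitian matrices in $D(H_0,\delta)$; on this slice $\|H\|<1$ by the same inequality, so Theorem~\ref{thm:thm2} applies and \eqref{eq:a-2} holds. Since the real slice is totally real of the maximal real dimension $N^2$ inside the $N^2$-dimensional complex polydisc, an iterated application of the one-variable identity theorem in each coordinate extends the identity to all of $D(H_0,\delta)$. The inclusion $D\bigl((0)_{j,k},(1/(2N))_{j,k}+I_N/(2N)\bigr)\subset B$ is then immediate from $|h_{\ell,k}|\le|u_{\ell,k}|+|v_{\ell,k}|<1/N$ and $|h_{j,j}|<1/N$.

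The principal difficulty lies in the pointwise estimate on $H_{m,n}(\bar w,w)$: a straightforward Cauchy bound from \eqref{eqGFHmn} yields a factor $m!\,n!$, which after the multinomial coupling produces a divergent sum of the form $\sum_K\prod_{m,n}k_{m,n}!\,|h_{m,n}|^{k_{m,n}}$, so the square-root improvement $\sqrt{m!\,n!}$---reflecting the $L^2$-orthogonality of the $H_{m,n}(z,\bar z)$ with respect to $e^{-|z|^2}\,dA$---is essential. Once it is in place, the remaining analytic-continuation argument is standard, and the construction of the explicit neighborhood around $H_0=0$ reduces to the triangle inequality already mentioned.
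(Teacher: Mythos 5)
Your proposal is correct and follows essentially the same route as the paper: a bound $|H_{m,n}(\overline{w},w)|\lesssim\sqrt{m!\,n!}$ plus multinomial estimates gives absolute and uniform domination by powers of $N\|H\|_{\infty}$, and the identity is then propagated from the Hermitian case of Theorem \ref{thm:thm2} by holomorphy in the coordinates $h_{j,j},u_{\ell,k},v_{\ell,k}$ together with the identity theorem applied on the real (Hermitian) slice of the polydisc $D(H_0,\delta)$. The only differences are cosmetic: the paper derives the cleaner estimate $|H_{m,n}(\overline{w},w)|\le e^{|w|^2}\sqrt{m!\,n!}$ from the integral representation \eqref{eq:1.9} and evaluates the dominating series exactly as $\left(1-N\|H\|_{\infty}\right)^{-2N}$ via Cauchy--Schwarz and the multinomial theorem, whereas your Szeg\H{o}-type Laguerre bound carries a harmless extra factor $e^{c\sqrt{m+n}}$ and you count the matrices $K$ with fixed $|K|$ instead, and the paper phrases the continuation step as vanishing of all power-series coefficients rather than an iterated one-variable identity theorem.
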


\begin{cor}\label{cor:cor1}
For $N\in\mbb{N}$, let $W=\left(\rho_1e^{i\theta_1},\dotsc,\rho_Ne^{i\theta_N}\right)^T$ that $\rho_m>0$, $\theta_m\in\mbb{R}$ for $m=1,\dotsc,N$ in \eqref{eq:1.5}, $H$, $I_N$, $K$, $c_m$ and $r_m$ are the same as in Theorem \ref{thm:thm2}, then
\begin{equation}
\label{eq:1.7}
\begin{gathered}
  \det\left(I_{N}+H\right)^{-1}
  \exp\left(W^{*}H\left(I_{N}+H\right)^{-1}W\right) \\
  =\sum_{K}\prod_{m=1}^{N}\prod_{n=1}^{N}
  \left(-h_{m,n}\right)^{k_{m,n}}\binom{c_{m}}{k_{1,m},\dotsc,k_{N,m}}
  \left(\rho_{m}e^{i\theta_{m}}\right)^{r_{m}-c_{m}}
  L_{c_{m}}^{\left(r_{m}-c_{m}\right)}\left(\rho_{m}^2\right),
\end{gathered}
\end{equation}
where $\left\{L_n^{(\alpha)}\left(x\right)\right\}$ are Laguerre polynomials. In particular, for $x,y>0$ and $\,|u|,|v|<\frac{|xy|}{4}$, we have
\begin{equation}
\begin{aligned} 
& \sum_{0\le j<k<\infty}\frac{\left(u^{j}v^{k}+u^{k}v^{j}\right)}{j!k!}C_{j}\left(k;x\right)C_{j}\left(k;y\right)\\
& =\frac{xy}{xy-uv}\exp\left(-\frac{xy\left(xuv-xy(u+v)+yuv\right)}{xy-uv}\right)\\
& -\frac{xy}{xy-uv}\exp\left(-\frac{uv(x^{2}+y^{2})}{xy-uv}\right)I_{0}\left(2\frac{\sqrt{uv(xy)^{3/2}}}{xy-uv}\right),
\label{eq:1.7a}
\end{aligned}
\end{equation}
where $C_{n}(x;a)$ is the $n$-th Charlier polynomial, $I_{\alpha}(z)$
is the Bessel function of first kind. 

\end{cor}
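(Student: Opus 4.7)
For the Laguerre expansion \eqref{eq:1.7}, the plan is to substitute $w_m=\rho_m e^{i\theta_m}$ directly into \eqref{eq:1.5} and express each $H_{r_m,c_m}(\overline{w_m},w_m)$ as a Laguerre polynomial. A direct reindexing of the inner sum in \eqref{eq:1.1} yields the identity
\[
H_{r,c}(\overline w,w) = (-1)^c\,c!\,\overline{w}^{\,r-c}\,L_{c}^{(r-c)}(\rho^2),
\]
valid for all $r,c\ge 0$ (with $L_n^{(\alpha)}$ read as a finite hypergeometric series when $\alpha$ is a negative integer). Substituted into \eqref{eq:1.5}, three combinatorial repackagings produce \eqref{eq:1.7}: the sign $\prod_m(-1)^{c_m}=(-1)^{\sum_{m,n}k_{m,n}}$ is absorbed into $\prod_{m,n}(-h_{m,n})^{k_{m,n}}$; the factorials $\prod_m c_m!/\prod_{m,n}k_{m,n}!$ regroup into the column-multinomials $\prod_m\binom{c_m}{k_{1,m},\dots,k_{N,m}}$; and the monomial $\prod_m\overline{w_m}^{\,r_m-c_m}$ is matched to $(\rho_m e^{i\theta_m})^{r_m-c_m}$ after absorbing a factor $\rho_m^{2(c_m-r_m)}$ via $L_n^{(-k)}(x)=(-x)^k(n-k)!/n!\,L_{n-k}^{(k)}(x)$.

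For \eqref{eq:1.7a}, I would apply Theorem \ref{thm2b} with $N=2$, the (non-Hermitian) anti-diagonal matrix
\[
H=\begin{pmatrix}0 & u'\\ v' & 0\end{pmatrix},\qquad w_1=\sqrt{x},\ w_2=\sqrt{y}.
\]
A direct computation gives $\det(I+H)=1-u'v'$ and $W^{*}H(I+H)^{-1}W=[(u'+v')\sqrt{xy}-u'v'(x+y)]/(1-u'v')$, while on the series side only the pairs $(k_{1,2},k_{2,1})=(j,k)$ survive (all other $k_{m,n}$ vanish since $h_{1,1}=h_{2,2}=0$). Converting the resulting $H_{j,k}(\sqrt x,\sqrt x)H_{k,j}(\sqrt y,\sqrt y)$ to Charlier polynomials via the Laguerre--Charlier link $L_n^{(m-n)}(a)=(-a)^n C_n(m;a)/n!$ (which specialises to $H_{j,k}(\sqrt a,\sqrt a)=a^{(j+k)/2}\,C_k(j;a)$), and using the Charlier symmetry $C_k(j;a)=C_j(k;a)$ for nonnegative integer arguments, the rescaling $u=u'\sqrt{xy}$, $v=v'\sqrt{xy}$ produces the \emph{full} bilinear generating function
\[
\sum_{j,k\ge 0}\frac{u^{j}v^{k}}{j!\,k!}\,C_j(k;x)\,C_j(k;y) \;=\; \frac{xy}{xy-uv}\exp\!\left[\frac{xy(u+v)-uv(x+y)}{xy-uv}\right].
\]

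The identity \eqref{eq:1.7a} is this full bilinear sum minus its diagonal contribution $\sum_{j\ge 0}(uv)^j/(j!)^2\,C_j(j;x)\,C_j(j;y)$, the residue being rewritten in the $\sum_{j<k}(u^jv^k+u^kv^j)$ form by the same Charlier symmetry. The main obstacle is therefore to evaluate the diagonal in closed form. The idea is to specialise the Laguerre--Charlier relation at $\alpha=n$, giving $C_n(n;a)=n!\,L_n(a)/(-a)^n$, which collapses the diagonal into the bilinear Laguerre sum $\sum_{j\ge 0}(uv/xy)^j L_j(x)L_j(y)$. Invoking the classical Hardy--Hille formula
\[
\sum_{n\ge 0}t^n L_n(x)L_n(y)=\frac{1}{1-t}\exp\!\left(-\frac{t(x+y)}{1-t}\right)I_0\!\left(\frac{2\sqrt{xyt}}{1-t}\right),\qquad|t|<1,
\]
at $t=uv/(xy)$ produces the $I_0$-term of \eqref{eq:1.7a}, and subtracting from the full bilinear sum yields the stated identity. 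The convergence domain for $(u,v)$ follows from the bound $\|H\|_\infty<\tfrac12$ of Theorem \ref{thm2b} applied to the rescaled matrix, combined with the Hardy--Hille requirement $|uv|<|xy|$.
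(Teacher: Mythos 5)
Your route coincides with the paper's own proof in both halves. For \eqref{eq:1.7} you substitute the polar form into \eqref{eq:1.5} and use the Laguerre representation \eqref{eq:4} of $H_{m,n}$, regrouping the factorials into the column multinomials, exactly as the paper does. For \eqref{eq:1.7a} you specialize to $N=2$ with the anti-diagonal matrix, pass to Charlier polynomials through $L_n^{(m-n)}(a)=\frac{(-a)^n}{n!}C_n(m;a)$, and evaluate the diagonal $j=k$ part by the Hardy--Hille bilinear Laguerre sum --- the same three steps as the paper. The only differences are cosmetic or mildly advantageous: you put $w_1=\sqrt{x}$, $w_2=\sqrt{y}$ at the outset (the paper works with $W=(x,y)^T$ and renames $x^2\to x$, $y^2\to y$ at the end), you isolate the full bilinear Charlier generating function as a clean intermediate identity, and you explicitly invoke Theorem \ref{thm2b}, which is indeed the correct citation here since this $H$ is not Hermitian (the paper leaves that point implicit).

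Two bookkeeping defects in your write-up deserve attention. First, in \eqref{eq:1.7} your own identity gives $\overline{w}^{\,r-c}=(\rho e^{-i\theta})^{r-c}$, and the proposed repair --- absorbing $\rho^{2(c-r)}$ via the $L_n^{(-k)}$ identity --- cannot convert this into $(\rho e^{i\theta})^{r-c}$: since $\overline{w}=\rho^{2}/w$, that manoeuvre merely toggles between the two branches of \eqref{eq:4} and leaves the phase $e^{-i\theta(r-c)}$ untouched. What you actually prove is \eqref{eq:1.7} with $\theta_m$ replaced by $-\theta_m$; the printed formula (and the paper's own proof, which silently replaces $\overline{w_m}$ by $\rho_m e^{i\theta_m}$) carries a conjugation slip, harmless in the real specialization used for \eqref{eq:1.7a}, but you should flag the discrepancy rather than claim an exact match. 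Second, the closed form your computation correctly yields for the off-diagonal sum is
\begin{equation*}
\frac{xy}{xy-uv}\exp\left(\frac{xy(u+v)-uv(x+y)}{xy-uv}\right)
-\frac{xy}{xy-uv}\exp\left(-\frac{uv(x+y)}{xy-uv}\right)I_{0}\left(\frac{2xy\sqrt{uv}}{xy-uv}\right),
\end{equation*}
which is not literally the right-hand side of \eqref{eq:1.7a} as printed (the printed first exponent carries an extra factor $xy$, and the second term has $x^{2}+y^{2}$ and $(xy)^{3/2}$ where your derivation gives $x+y$ and $(xy)^{2}$). Comparing the coefficient of $uv$ on both sides shows your version is the consistent one, so the mismatch points to typos in the printed corollary (which also infect the paper's own displayed computation); nevertheless your assertions that Hardy--Hille ``produces the $I_0$-term of \eqref{eq:1.7a}'' and that the subtraction ``yields the stated identity'' should be replaced by an explicit statement of what you obtain and a remark on the discrepancy.
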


Ismail's proof in \cite{Ism4} assumes that $H$ is Hermitian and $\|H\|<1$ and proves that \eqref{eq:1.5} holds as a convergent power series in the variables $h_{j,k}$, $1\le j\le k\le N$. Later Ismail and Zeng \cite{Ism:Zen} found a combinatorial proof of Theorem \ref{thm:thm2} where $H$ is not necessary symmetric, but the power series in \eqref{eq:1.5} is a formal power series.

The purpose of this paper is to first prove Theorems \ref{thm:thm1} and \ref{thm:thm2} and Corollary \ref{cor:cor1} by using the integral representations
\begin{equation}
H_{n}\left(x\right)e^{-x^{2}}=\frac{\left(-2i\right)^{n}}{\sqrt{\pi}}
\int_{-\infty}^{\infty}t^{n}e^{-t^{2}+2ixt}dt,
\label{eq:1.8}
\end{equation}
and
\begin{equation}
e^{-z\overline{z}}H_{m,n}\left(z,\bar z\right)
=\frac{ i^{m+n}}{\pi}\int_{\mbb{R}^{2}}
w^{m}{\bar w}^{n}\exp\left\{-\left(r^{2}+s^{2}\right)
-2i\re\left(w \bar z\right)\right\} drds,
\label{eq:1.9}
\end{equation}
where $z=x+iy$ and $w=r+is$ such that $r,s,x,y\in\mbb{R}$. The representation \eqref{eq:1.8} is well-known, see for example, formula (4.6.41) in \cite{Ism}, while \eqref{eq:1.9} will be proved in \S\ref{sec3}. Our proof actually proves a stronger version of Theorems \ref{thm:thm1}--\ref{thm:thm2}, where $S$ and $H$ are not assumed to be symmetric and Hermitian, respectively.

It is important to note that the left-hand sides of the multilinear generating functions in Theorems \ref{thm:thm1}--\ref{thm:thm2} are positive, when $S$ and $H$ are real symmetric and Hermitian, respectively. They contain the Poisson kernels as the special cases when $N=2$ and the diagonal elements of the matrices involved are zero, \cite{Ism4}, \cite[\S 4.7]{Ism}. Carlitz \cite{Car} actually found the Poisson kernel for the $2D$ Hermite polynomials in 1978, $20$ years before \cite{Wun,Wun2}. He identified the $2D$ Hermite polynomials as special cases of a $3D$ system which he studied in detail but did not derive its orthogonality. Carlitz was not aware that his polynomials are the same as Ito's $2D$-Hermite polynomials. Carlitz did not elaborate on the orthogonality of his bivariate or trivariate polynomials.

Section \ref{sec2} contains the proofs of Theorems \ref{thm:thm1}--\ref{thm:thm2}. Section \ref{sec3} contains some new formal properties of the $2D$ Hermite polynomials. In our approach we treat $H_{m,n}\left(z_1,z_2\right)$ as a function of two independent complex variables and view the case $z_2=\overline{z_1}$ as a domain of orthogonality in $\mbb{C}^2$. In Section \ref{sec4} we derive several multiliear generating functions for the $2D$ $q$-Hermite polynomials we introduced in \cite{Ism:Zha}. We do not have a $q$-analogue of the Kibble--Slepian formula of Theorem \ref{thm:thm1} but the results in \S\ref{sec4} would be special cases of such formula. There
is no Kibble-Slepian formula known for the one variable $q$-Hermite polynomials either but their Poisson kernel is known.

\section{Proofs}\label{sec2}

We shall use the multidimensional Taylor series for functions mapping $\mbb{R}^N$ into $\mbb{R}$. For $\alpha=(\alpha_{1},\alpha_{2},\dots)$ such that $\alpha_{1},\,\alpha_{2},\dots$
are nonnegative integers, let $|\alpha|=\alpha_{1}+\alpha_{2}+\cdots$
, $\alpha!=\alpha_{1}!\cdot\alpha_{2}!\cdots$, and $\mb{x}^{\alpha}=x_{1}^{\alpha_{1}}\cdot x_{2}^{\alpha_{2}}\cdots$
. Additionally, for $n\in\mathbb{N}_{0}$ and $|\alpha|=n$ we let
$\binom{n}{\alpha}=\frac{n!}{\alpha!}$ and $D^{\alpha}f(\mb{x})=\frac{\partial^{|\alpha|}f(\mb{x})}{\partial x_{1}^{\alpha_{1}}\cdot\partial x_{2}^{\alpha_{2}}\cdots}$.
\begin{thm}\label{thmnDTaylor}
Assume that $f$ and all its partial derivatives of order $<m$ are differentiable at each point of an open set $S\subset\mbb{R}^n$. If $\mb{a}$ and $\mb{b}$ are two points of $S$ such that the line joining $\mb{a}$ and $\mb{b}$ is contained in $S$. We further let
\begin{equation}
f^{(k)}(\mb{x};\mb{t})=\sum_{|\alpha|=k}\binom{k}{\alpha}D^{\alpha}f(\mb{x})\mb{t}^{\alpha},
\end{equation}
then
\begin{equation}
f(\mb{b})=f(\mb{a})+\sum_{k=1}^{m-1}\frac{1}{k!}\,f^{(k)}(\mb{a};\mb{b}-\mb{a})
+\frac{1}{m!}\,f^{(m)}(\mb{z};\mb{b}-\mb{a}),
\end{equation}
for some $\mb{z}$ on the line segment joining $\mb{b}$ and $\mb{a}$.
\end{thm}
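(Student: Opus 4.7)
The plan is to reduce to the single-variable Taylor theorem by introducing the auxiliary function $g\colon[0,1]\to\mbb{R}$ defined by $g(t)=f(\mb{a}+t(\mb{b}-\mb{a}))$, which is well-defined since the segment from $\mb{a}$ to $\mb{b}$ is contained in $S$ by hypothesis. The differentiability assumptions on $f$ and on all of its partial derivatives of order less than $m$ guarantee, via the ordinary chain rule applied iteratively, that $g$ is $m$-times differentiable on $(0,1)$ and that $g^{(m-1)}$ is continuous on $[0,1]$.

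The key computation is the identity
\[
g^{(k)}(t)=f^{(k)}\bigl(\mb{a}+t(\mb{b}-\mb{a});\,\mb{b}-\mb{a}\bigr),\qquad k=1,\dots,m,
\]
proved by induction on $k$. The base case $k=1$ is the standard multivariate chain rule, yielding $g'(t)=\sum_{j=1}^n(b_j-a_j)\,\partial_j f(\mb{a}+t(\mb{b}-\mb{a}))$. For the inductive step one differentiates $g^{(k)}$ once more, and reorganizes the resulting double sum over multi-indices $\beta$ with $|\beta|=k$ and the coordinate index $j$ by substituting $\alpha=\beta+e_j$, where $e_j$ is the $j$th standard basis vector. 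The reorganization produces the multinomial coefficient $\binom{k+1}{\alpha}$ via the Pascal-type recursion
\[
\binom{k+1}{\alpha}=\sum_{j:\,\alpha_j\ge1}\binom{k}{\alpha-e_j},
\]
which itself follows from $\sum_{j:\alpha_j\ge1}\binom{k}{\alpha-e_j}=\frac{k!}{\alpha!}\sum_j\alpha_j=\frac{(k+1)!}{\alpha!}$.

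With this identity in hand, I would invoke the one-variable Taylor theorem with Lagrange remainder for $g$ on $[0,1]$, producing $\tau\in(0,1)$ such that
\[
g(1)=g(0)+\sum_{k=1}^{m-1}\frac{g^{(k)}(0)}{k!}+\frac{g^{(m)}(\tau)}{m!}.
\]
Setting $\mb{z}=\mb{a}+\tau(\mb{b}-\mb{a})$ and substituting the closed-form expressions for $g(0)=f(\mb{a})$, $g(1)=f(\mb{b})$, and $g^{(k)}$ yields the stated formula.

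The main obstacle is the bookkeeping in the higher-order chain rule that converts the iterated sum of directional derivatives into the multinomial-coefficient form used in the definition of $f^{(k)}$; once that combinatorial step is executed cleanly via the Pascal recursion above, the remainder of the argument is a direct translation between the one-variable and multivariate notations.
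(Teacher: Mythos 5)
Your proof is correct: the reduction to $g(t)=f(\mb{a}+t(\mb{b}-\mb{a}))$, the inductive chain-rule computation of $g^{(k)}$ with the Pascal-type recursion $\binom{k+1}{\alpha}=\sum_{j:\alpha_j\ge1}\binom{k}{\alpha-e_j}$, and the one-variable Lagrange-remainder theorem fit the stated hypotheses exactly (partials of order $<m$ differentiable give $g^{(m)}$ on $(0,1)$ and continuity of $g^{(m-1)}$ on $[0,1]$). The paper itself gives no proof, simply citing Theorem 12.14 of Apostol, and your argument is essentially the standard proof of that cited result, so there is nothing to add.
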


This is essentially Theorem 12.14 in \cite{Apo}.

\begin{lem}\label{lem:1}
Let $S=\left(s_{j,k}\right)_{j,k=1}^{N}$ be an $N\times N$ real symmetric matrix and $Y$ an $N\times1$ complex matrix, then,
\begin{equation}
\exp\left(-Y^{T}SY\right)=\sum_{K}\left(\prod_{1\le m\le n\le N}\frac{\left(-2s_{m,n}\right)^{k_{m,n}}}{k_{m,n}!}\right)2^{-\tr\left(K\right)}y_1^{k_1}y_2^{k_2}\cdots y_n^{k_n},
\label{eq:2.1}
\end{equation}
where $K,k_j,\tr\left(K\right)$ are the same as in Theorem \ref{thm:thm1}.
\end{lem}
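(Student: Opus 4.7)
The plan is to reduce the lemma to a direct power-series expansion by exploiting the symmetry of $S$ to factor the quadratic form $Y^{T}SY$. Using $s_{j,k}=s_{k,j}$, I would first rewrite
\begin{equation*}
-Y^{T}SY \;=\; -\sum_{j=1}^{N}s_{j,j}\,y_{j}^{2} \;-\; 2\!\!\sum_{1\le j<k\le N}\!\!s_{j,k}\,y_{j}y_{k},
\end{equation*}
so that the exponential factors as a product of one-term exponentials indexed by unordered pairs $(m,n)$ with $m\le n$:
\begin{equation*}
\exp(-Y^{T}SY) \;=\; \prod_{j=1}^{N}\exp(-s_{j,j}y_{j}^{2})\;\prod_{1\le j<k\le N}\exp(-2s_{j,k}y_{j}y_{k}).
\end{equation*}

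Next I would expand each of the $\binom{N+1}{2}$ factors by the Taylor series $e^{w}=\sum_{k\ge 0}w^{k}/k!$, writing each diagonal factor in the form
\begin{equation*}
\exp(-s_{j,j}y_{j}^{2})=\sum_{k_{j,j}\ge 0}\frac{(-2s_{j,j})^{k_{j,j}}}{2^{k_{j,j}}k_{j,j}!}\,y_{j}^{2k_{j,j}}
\end{equation*}
so as to match the normalization $(-2s_{m,n})^{k_{m,n}}$ used on the right-hand side of \eqref{eq:2.1}. Since each of these power series is entire, the full product may be rearranged into an absolutely convergent iterated sum indexed by $K=(k_{m,n})_{1\le m\le n\le N}$. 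Collecting factors, the overall prefactor attached to a given $K$ becomes
\begin{equation*}
\left(\prod_{1\le m\le n\le N}\frac{(-2s_{m,n})^{k_{m,n}}}{k_{m,n}!}\right)2^{-\tr(K)},
\end{equation*}
where $2^{-\tr(K)}=\prod_{j}2^{-k_{j,j}}$ arises solely from the diagonal factors. The total power of $y_{\ell}$ produced by the $j=\ell$ diagonal factor together with every off-diagonal factor involving $y_{\ell}$ equals $2k_{\ell,\ell}+\sum_{j<\ell}k_{j,\ell}+\sum_{k>\ell}k_{\ell,k}$, which under the symmetric extension $k_{m,n}=k_{n,m}$ of \eqref{eq:1.4} simplifies to $k_{\ell,\ell}+\sum_{j=1}^{N}k_{\ell,j}=k_{\ell}$.

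There is no serious obstacle in this argument; it is essentially careful bookkeeping. The only subtle point is tracking the two distinct sources of the factor of $2$: the symmetry-induced $2$ in front of the cross terms in $Y^{T}SY$ gets absorbed into the $(-2s_{m,n})$ normalization off the diagonal, while on the diagonal the same $(-2s_{m,n})$ normalization is what generates the compensating $2^{-\tr(K)}$. The combinatorial identity for the exponent $k_{\ell}$ encodes the fact that the diagonal index $k_{\ell,\ell}$ is counted with multiplicity two in $k_\ell$ because it arises from a single $y_{\ell}^{2}$, whereas each off-diagonal $k_{\ell,j}$ contributes to $y_{\ell}$ only once.
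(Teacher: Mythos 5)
Your proof is correct, but it is not the route the paper takes for this lemma: the paper's designated proof treats $\exp(-Y^{T}SY)$ as an analytic function of the $\binom{N+1}{2}$ variables $s_{m,n}$, $1\le m\le n\le N$, computes the coefficients $a_{k_{m,n}}=\partial^{k_{m,n}}\exp(-Y^{T}SY)/\partial s_{m,n}^{k_{m,n}}\big|_{S=0}$ (namely $(-1)^{k_{m,m}}y_m^{2k_{m,m}}$ on the diagonal and $(-2)^{k_{m,n}}y_m^{k_{m,n}}y_n^{k_{m,n}}$ off it), and then invokes the multidimensional Taylor theorem (Theorem \ref{thmnDTaylor}) together with a claim that the remainder tends to zero. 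Your argument — split $-Y^{T}SY$ into diagonal and doubled off-diagonal terms, factor the exponential into $\binom{N+1}{2}$ entire one-variable exponential series, expand each, and rearrange the absolutely convergent product — is exactly the alternative derivation the authors record in the Remark following Lemma \ref{lem:2}, where they call it a ``formal'' proof avoiding Theorem \ref{thmnDTaylor}. In fact your version is arguably the cleaner of the two: since only finitely many everywhere-absolutely-convergent series are multiplied, the rearrangement into the sum over $K$ is fully rigorous, whereas the paper's Taylor-remainder step is asserted without detail. Your bookkeeping of the powers of $2$ and of the exponent $k_\ell=k_{\ell,\ell}+\sum_{j=1}^{N}k_{\ell,j}$ from \eqref{eq:1.4} (the diagonal index counted twice because it comes from $y_\ell^{2}$) matches \eqref{eq:2.1} exactly, so there is no gap.
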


\begin{proof}
Observe that $\exp\left(-Y^{T}SY\right)$ is an analytic function in the variables $s_{m,n}$ for $1\le m\le n\le N$ separately, then,
$$
\exp\left(-Y^{T}SY\right)=\sum_{K}\left(\prod_{1\le m\le n\le N}\frac{\left(s_{m,n}\right)^{k_{m,n}}}{k_{m,n}!}a_{k_{m,n}}\right),
$$
then for $1\le m=n\le N$,
$$
a_{k_{m,m}}=\frac{\partial^{k_{m,m}}\exp\left(-Y^{T}SY\right)}{\partial s_{m,m}^{k_{m,m}}}\bigg|_{S=0}=\left(-1\right)^{k_{m,m}}y_{m}^{2k_{m,m}},
$$
and for $1\le m<n\le N$,
$$
a_{k_{m,n}}=\frac{\partial^{k_{m,n}}\exp\left(-Y^{T}SY\right)}{\partial s_{m,n}^{k_{m,n}}}\bigg|_{S=0}=\left(-2\right)^{k_{m,n}}y_{m}^{k_{m,n}}y_{n}^{k_{m,n}}.
$$
We apply Theorem \ref{thmnDTaylor} and show that the error term $\to  0$ as $m \to \infty$
and conclude that
$$
\exp\left(-Y^{T}SY\right)=\sum_{K}\left(\prod_{1\le m\le n\le N}\frac{\left(-2s_{m,n}\right)^{k_{m,n}}}{k_{m,n}!}\right)2^{-\tr\left(K\right)}y_{1}^{k_{1}}y_{2}^{k_{2}}\dotsm y_{n}^{k_{n}}.
$$
\end{proof}

\begin{lem}\label{lem:2}
Let $H=\left(h_{j,k}\right)_{j,k=1}^{N}$ be an $N\times N$ complex matrix and $Z$ an $N\times1$ complex matrix $Z=\left(z_1,\dotsc,z_N\right)^T$ and $z_j=x_j+iy_j$, $x_j,y_j\in\mbb{R}$ for $j=1,\dotsc,N$, then,
\begin{equation}
\exp\left(-Z^{*}HZ\right)=\sum_{K}\prod_{j=1}^{N}\left(\overline{z_{j}}\right)^{r_{j}}z_{j}^{c_{j}}
\left(\prod_{1\le m,n\le N}\frac{\left(-h_{m,n}\right)^{k_{m,n}}}{k_{m,n}!}\right),
\label{eq:2.2}
\end{equation}
where $K,k_{m,n},r_{m},c_{n}$ are the same as in Theorem \ref{thm:thm2}.
\end{lem}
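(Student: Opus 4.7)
The plan is to mimic the proof of Lemma \ref{lem:1}, but in fact the proof here will be even cleaner because $Z^{*}HZ$ is a sum of pairwise independent monomials in the entries of $H$, with no diagonal/off-diagonal dichotomy to worry about. The key identity to exploit is
\begin{equation*}
Z^{*}HZ=\sum_{m,n=1}^{N}h_{m,n}\,\overline{z_{m}}\,z_{n},
\end{equation*}
so that the exponent splits as a sum over the $N^{2}$ independent scalars $h_{m,n}\overline{z_{m}}z_{n}$.

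The central step is therefore to write the exponential as a product and expand:
\begin{equation*}
\exp\bigl(-Z^{*}HZ\bigr)=\prod_{m,n=1}^{N}\exp\bigl(-h_{m,n}\overline{z_{m}}z_{n}\bigr)
=\prod_{m,n=1}^{N}\sum_{k_{m,n}=0}^{\infty}\frac{(-h_{m,n})^{k_{m,n}}\,\overline{z_{m}}^{\,k_{m,n}}\,z_{n}^{k_{m,n}}}{k_{m,n}!}.
\end{equation*}
Multiplying the $N^{2}$ series together and indexing the result by the matrix $K=(k_{m,n})_{m,n=1}^{N}$ gives a sum of monomials of the form $\prod_{m,n}\overline{z_{m}}^{\,k_{m,n}}z_{n}^{k_{m,n}}$. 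Collecting factors with a fixed $m$ (respectively $n$) yields $\overline{z_{m}}$ raised to the row sum $r_{m}=\sum_{\ell}k_{m,\ell}$ and $z_{n}$ raised to the column sum $c_{n}=\sum_{j}k_{j,n}$, which is exactly \eqref{eq:2.2}.

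The only delicate point is to justify the rearrangement of the multiple series so that we may regroup by $K$ rather than by the order in which we expanded the $N^{2}$ exponentials. For this I would note that the corresponding series of absolute values factors as
\begin{equation*}
\sum_{K}\prod_{m,n=1}^{N}\frac{|h_{m,n}|^{k_{m,n}}|z_{m}|^{k_{m,n}}|z_{n}|^{k_{m,n}}}{k_{m,n}!}=\prod_{m,n=1}^{N}\exp\bigl(|h_{m,n}|\,|z_{m}|\,|z_{n}|\bigr),
\end{equation*}
which is finite for every $H\in\mbb{C}^{N\times N}$ and $Z\in\mbb{C}^{N}$. By Fubini/Tonelli for sums, the multiple series is absolutely convergent and may be grouped in any order, in particular in the order prescribed by the index set $\{K\}$. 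Alternatively, one can proceed exactly as in Lemma \ref{lem:1} by viewing $\exp(-Z^{*}HZ)$ as an entire function of the $N^{2}$ independent complex variables $h_{m,n}$, computing $\partial_{h_{m,n}}^{k_{m,n}}\exp(-Z^{*}HZ)\big|_{H=0}=(-\overline{z_{m}}z_{n})^{k_{m,n}}$, and applying the multivariate Taylor theorem (Theorem \ref{thmnDTaylor}) together with an estimate of the remainder term, which vanishes as the order tends to infinity because of the global absolute bound above. I do not anticipate any real obstacle: the combinatorics is transparent once one recognizes that $Z^{*}HZ$ is linear in each entry $h_{m,n}$, and convergence is automatic from the factorization.
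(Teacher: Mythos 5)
Your argument is correct. Your central step---writing $\exp\left(-Z^{*}HZ\right)=\prod_{m,n=1}^{N}\exp\left(-h_{m,n}\overline{z_{m}}z_{n}\right)$, expanding each factor, and collecting powers into the row sums $r_{m}$ and column sums $c_{n}$---is exactly the computation the paper records in the remark following Lemma \ref{lem:2}, where it is presented only as a \emph{formal} proof. The paper's official proof of the lemma instead views $\exp\left(-Z^{*}HZ\right)$ as analytic in the entries $h_{j,k}$, computes the Taylor coefficients
\begin{equation*}
\frac{\partial^{k_{m,n}}\exp\left(-Z^{*}HZ\right)}{\partial h_{m,n}^{k_{m,n}}}\bigg|_{H=0}=\left(-\overline{z_{m}}z_{n}\right)^{k_{m,n}},
\end{equation*}
and appeals to the multivariate Taylor theorem (Theorem \ref{thmnDTaylor}) with a remainder estimate, in parallel with Lemma \ref{lem:1}; you sketch this as your fallback route. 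What your primary route buys is that it upgrades the paper's formal manipulation to a complete proof with minimal machinery: since all $N^{2}$ series are absolutely convergent and the series of absolute values factors as $\prod_{m,n}\exp\left(\left|h_{m,n}\right|\left|z_{m}\right|\left|z_{n}\right|\right)<\infty$, Tonelli/Fubini for countable sums licenses the regrouping by the matrix index $K$, with no remainder analysis and no restriction on $H$ or $Z$. The only caveat is that in your alternative (Taylor) route the phrase ``estimate of the remainder term'' is left as a gesture rather than carried out, but since your main argument is self-contained this does not affect the validity of the proof.
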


\begin{proof}
Observe that $\exp\left(-Z^{*}HZ\right)$ is analytic in the variables $h_{j,k}$, $j,k=1,\dotsc,N$ separately, then
$$
\exp\left(-Z^{*}HZ\right)=\sum_{K}\prod_{1\le m,n\le N}\frac{\left(h_{m,n}\right)^{k_{m,n}}}{k_{m,n}!}a_{k_{m,n}}
$$
and
$$
a_{k_{m,n}}=\frac{\partial^{k_{m,n}}\exp\left(-Z^{*}HZ\right)}{\partial h_{m,n}^{k_{m,n}}}\bigg|_{H=0}=\left(-\overline{z_{m}}z_{n}\right)^{k_{m,n}}
$$
for $k_{m,n}=0,1,\dotsc$ and $m,n=1,\dotsc,N$, then,
\begin{align*}
\exp\left(-Z^{*}HZ\right) &=\sum_{K}\prod_{1\le m,n\le N}\frac{\left(-h_{m,n}\right)^{k_{m,n}}\left(\overline{z_{m}}\right)^{k_{m,n}}\left(z_{n}\right)^{k_{m,n}}}{k_{m,n}!} \\
 &=\sum_{K}\prod_{j=1}^{N}\left(\overline{z_{j}}\right)^{r_{j}}z_{j}^{c_{j}}\left(\prod_{1\le m,n\le N}\frac{\left(-h_{m,n}\right)^{k_{m,n}}}{k_{m,n}!}\right).
\end{align*}
This completes the proof.
\end{proof}

\begin{rmk}
We have noticed that identities \ref{eq:2.1} and \ref{eq:2.2} can be proved formally without using Theorem \ref{thmnDTaylor}. We observe that for the former we have, 
\[
\begin{aligned} & \exp\left(-Y^{T}SY\right)=\exp\left(-\sum_{i,j=1}^{N}s_{i,j}y_{i}y_{j}\right)=\exp\left(-\sum_{i=1}^{N}s_{i,i}y_{i}^{2}-2\sum_{1\le i<j\le N}^{N}s_{i,j}y_{i}y_{j}\right)\\
& =\left(\prod_{i=1}^{N}\sum_{k_{i,i}=0}^{\infty}\frac{\left(-s_{i,i}y_{i}^{2}\right)^{k_{i,i}}}{\left(k_{i,i}\right)!}\right)\cdot\left(\prod_{1\le m<n\le N}\sum_{k_{m,n}=0}^{\infty}\frac{\left(-2s_{m,n}y_{m}y_{n}\right)^{k_{m,n}}}{\left(k_{m,n}\right)!}\right)\\
& =\sum_{k_{1,1},k_{2,2},\dots,k_{N,N}=0}^{\infty}\frac{\left(-s_{1,1}y_{1}^{2}\right)^{k_{1,1}}}{\left(k_{1,1}\right)!}\frac{\left(-s_{2,2}y_{2}^{2}\right)^{k_{2,2}}}{\left(k_{2,2}\right)!}\dots\frac{\left(-s_{N,N}y_{i}^{2}\right)^{k_{N,N}}}{\left(k_{N,N}\right)!}\\
& \times\sum_{k_{1,2},k_{1,3},k_{1,N},k_{2,3},\dots,k_{2,N},\dots,k_{N-1,N}=0}^{\infty}\frac{\left(-2s_{1,2}y_{1}y_{2}\right)^{k_{1,2}}}{\left(k_{1,2}\right)!}\dots\frac{\left(-2s_{1,N}y_{1}y_{N}\right)^{k_{1,N}}}{\left(k_{1,N}\right)!}\\
& \times\frac{\left(-2s_{2,3}y_{2}y_{3}\right)^{k_{2,3}}}{\left(k_{2,3}\right)!}\dots\frac{\left(-2s_{2,N}y_{2}y_{N}\right)^{k_{2,N}}}{\left(k_{2,N}\right)!}\dots\frac{\left(-2s_{N-1,N}y_{N-1}y_{N}\right)^{k_{N-1,N}}}{\left(k_{N-1,N}\right)!}\\
& =\sum_{K}\left(\prod_{i=1}^{N}\left(\frac{y_{i}}{2}\right)^{k_{i,i}}\right)\prod_{1\le m\le n\le N}\frac{\left(-2s_{m,n}y_{m}y_{n}\right)^{k_{m,n}}}{\left(k_{m,n}\right)!}=\sum_{K}2^{-\mathrm{tr}(K)}\prod_{1\le m\le n\le N}\frac{\left(-2s_{m,n}\right)^{k_{m,n}}}{\left(k_{m,n}\right)!}\\
& \times y_{1}^{2k_{1,1}+k_{1,2}+\dots+k_{1,N}}y_{2}^{2k_{2,2}+k_{2,3}+\dots k_{2,N}}\dots y_{N-1}^{2k_{N-1,N-1}+k_{N-1,N}}y_{N}^{2k_{N,N}}\\
& =\sum_{K}\left(\prod_{1\le m\le n\le N}\frac{\left(-2s_{m,n}\right)^{k_{m,n}}}{k_{m,n}!}\right)2^{-\mbox{tr}\left(K\right)}y_{1}^{k_{1}}y_{2}^{k_{2}}\cdots y_{N}^{k_{N}},
\end{aligned}
\]
whereas for the latter we have,
\[
\begin{aligned} & \exp\left(-Z^{*}HZ\right)=\exp\left(-\sum_{m,n=1}^{N}h_{m,n}\overline{z_{m}}z_{n}\right)=\prod_{1\le m,n\le N}\exp\left(-h_{m,n}\overline{z_{m}}z_{n}\right)\\
& =\prod_{1\le m,n\le N}\sum_{k_{m,n}=0}^{\infty}\frac{\left(-h_{m,n}\overline{z_{m}}z_{n}\right)^{k_{m,n}}}{\left(k_{m,n}\right)!}=\sum_{K}\prod_{1\le m,n\le N}\frac{\left(-h_{m,n}\right)^{k_{m,n}}}{\left(k_{m,n}\right)!}\\
& \times\left(\overline{z_{1}}\right)^{k_{1,1}+k_{1,2}+\dots+k_{1,N}}\dots\left(\overline{z_{N}}\right)^{k_{N,1}+k_{N,2}+\dots+k_{N,N}}z_{1}^{k_{1,1}+k_{2,1}+\dots+k_{N,1}}\dots\\
& \times z_{N}^{k_{1,N}+k_{2,N}+\dots+k_{N,N}}=\sum_{K}\left(\prod_{1\le m,n\le N}\frac{\left(-h_{m,n}\right)^{k_{m,n}}}{\left(k_{m,n}\right)!}\right)\left(\overline{z_{1}}\right)^{r_{1}}\dots\left(\overline{z_{N}}\right)^{r_{N}}z_{1}^{c_{1}}\dots z_{N}^{c_{N}}\\
& =\sum_{K}\left(\prod_{j=1}^{N}\left(\overline{z_{j}}\right)^{r_{j}}z_{j}^{c_{j}}\right)\left(\prod_{1\le m,n\le N}\frac{\left(-h_{m,n}\right)^{k_{m,n}}}{k_{m,n}!}\right).
\end{aligned}
\]

\end{rmk}

\begin{lem}\label{lem:4}	
For all $m,n\in\mbb{Z}^+$ and $z\in\mbb{C}$ we have
\begin{equation}
\left|H_{m,n}\left(\overline{z},z\right)\right|\le e^{|z|^{2}}\sqrt{m!\cdot n!}.
\label{eq:a-1}
\end{equation}
\end{lem}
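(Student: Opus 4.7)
The plan is to read the inequality directly off the integral representation \eqref{eq:1.9}. Since the defining sum \eqref{eq:1.1} is manifestly symmetric under simultaneously swapping $(m,n)$ and $(z_1,z_2)$, we have $H_{m,n}(\bar z,z)=H_{n,m}(z,\bar z)$. So I would first rewrite $|H_{m,n}(\bar z,z)|=|H_{n,m}(z,\bar z)|$ and apply \eqref{eq:1.9} with the roles of $m$ and $n$ exchanged, getting
\[
e^{-|z|^2}\,|H_{m,n}(\bar z,z)|=\frac{1}{\pi}\left|\int_{\mbb{R}^{2}}w^{n}\bar w^{m}\exp\!\bigl\{-|w|^{2}-2i\re(w\bar z)\bigr\}\,dr\,ds\right|.
\]

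Next I would pull the absolute value inside the integral. Because $|e^{-2i\re(w\bar z)}|=1$ and $|w^{n}\bar w^{m}|=|w|^{m+n}$, the dependence on $z$ disappears from the majorant and we are left with
\[
e^{-|z|^{2}}\,|H_{m,n}(\bar z,z)|\le\frac{1}{\pi}\int_{\mbb{R}^{2}}|w|^{m+n}e^{-|w|^{2}}\,dr\,ds.
\]
Polar coordinates $w=\rho e^{i\theta}$ followed by the substitution $u=\rho^{2}$ turn this into a single gamma integral, and the standard calculation yields $\Gamma\!\left(\tfrac{m+n}{2}+1\right)$.

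The only genuinely non-mechanical step is the elementary-but-key bound
\[
\Gamma\!\left(\tfrac{m+n}{2}+1\right)\le\sqrt{m!\,n!}.
\]
I would deduce this from the log-convexity of $\Gamma$ on $(0,\infty)$ (Bohr--Mollerup): applying convexity of $\log\Gamma$ at the midpoint of $m+1$ and $n+1$ gives $\log\Gamma\!\left(\tfrac{(m+1)+(n+1)}{2}\right)\le\tfrac12\bigl(\log\Gamma(m+1)+\log\Gamma(n+1)\bigr)$, and exponentiating this is exactly the desired inequality. Combined with the previous estimate, this delivers \eqref{eq:a-1}.

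The only real obstacle is conceptual rather than technical: after the naive majorization of the oscillatory integral one is left with $\Gamma((m+n)/2+1)$, and one must recognize this quantity as dominated by the ``symmetric'' bound $\sqrt{m!\,n!}$ coming from log-convexity; the inequality becomes equality when $m=n$, confirming that this route is sharp in that case.
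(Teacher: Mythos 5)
Your proof is correct and takes essentially the same route as the paper: both start from the integral representation \eqref{eq:1.9} (your symmetry step $H_{m,n}(\overline z,z)=H_{n,m}(z,\overline z)$ is harmless and legitimate) and majorize the oscillatory factor trivially. The only difference is in packaging the final estimate: the paper applies Cauchy--Schwarz inside the integral to split $|w|^{m+n}=|w|^{m}\cdot|w|^{n}$ and obtain $\sqrt{\pi m!}\sqrt{\pi n!}$ directly, whereas you evaluate the integral as $\Gamma\left(\tfrac{m+n}{2}+1\right)$ and then invoke log-convexity of $\Gamma$ at the midpoint of $m+1$ and $n+1$ --- which is the same Cauchy--Schwarz bound in disguise, so the two arguments are equivalent in substance.
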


\begin{proof}
From Equation \eqref{eq:1.9} we get
\begin{gather*}
\pi e^{-|z|^{2}} H_{m,n}\left(\overline{z},z\right)|
\le\int_{\mbb{R}^{2}}|w|^{m}\cdot|w|^{n}\exp\left\{-\left(r^{2}+s^{2}\right)\right\} drds \\
\begin{aligned}
&\le\sqrt{\int_{\mbb{R}^{2}}|w|^{2m}\exp\left\{-\left(r^{2}+s^{2}\right)\right\}drds}\sqrt{\int_{\mbb{R}^{2}}|w|^{2n}\exp\left\{-\left(r^{2}+s^{2}\right)\right\} drds} \\
&=\sqrt{\int_{\mbb{R}^{2}}(r^{2}+s^{2})^{m}\exp\left\{ -\left(r^{2}+s^{2}\right)\right\} drds}\sqrt{\int_{\mbb{R}^{2}}(r^{2}+s^{2})^{n}\exp\left\{ -\left(r^{2}+s^{2}\right)\right\} drds} \\
&=\pi\sqrt{m!\cdot n!},
\end{aligned}
\end{gather*}
which gives \eqref{eq:a-1}.
\end{proof}

We now present our proof of Theorem \ref{thm:thm1}.

\begin{proof}[Proof of Theorem \ref{thm:thm1}]
First we observe that
$$
\|S\|^{2}=\sum_{m,n=1}^{N}\left|s_{m,n}\right|^{2}=\tr\left(SS^T\right)=\sum_{j=1}^N\lambda_j^2,
$$
where $\lambda_{j}$, $j=1,\dotsc,N$ are the eigenvalues of $S$, then the matrix $I_{N}+S$ is positive definite and thus $\left(I_N+S\right)^{-1}$ exists and it is positive definite. It is clear that,
\begin{gather*}
\det\left(I_N+S\right)^{-\frac{1}{2}}\exp\left(X^{T}S\left(I_N+S\right)^{-1}X\right) \\
=\det\left(I_N+S\right)^{-\frac{1}{2}}\exp\left(-X^{T}\left(I_N+S\right)^{-1}X+X^TX\right),
\end{gather*}
then \eqref{eq:1.3} is equivalent to
\begin{gather*}
\det\left(I_N+S\right)^{-\frac{1}{2}}\exp\left(-X^{T}\left(I_{N}+S\right)^{-1}X\right) \\
=\sum_{K}\left(\prod_{1\le m\le n\le N}\frac{\left(s_{m,n}\right)^{k_{m,n}}}{2^{k_{m,n}}k_{m,n}!}\right)
 2^{-\tr\left(K\right)}\psi_{k_{1}}\left(x_{1}\right)\dotsb\psi_{k_{N}}\left(x_{N}\right),
\end{gather*}
where $\psi_n\left(x\right)=e^{-x^2}H_n\left(x\right)$. Applying the multivariate normal integral \cite{Bel}
\begin{align}
\int_{\mbb{R}^{N}}\exp\left(-X^{T}AX+2iB^{T}X\right)\prod_{j=1}^{N}dx_{j}
&=\sqrt{\frac{\pi^{N}}{\det A}}e^{-B^{T}A^{-1}B},
\label{eq:2.8}
\end{align}
where $N\in\mbb{N}$, $A$ is an $N\times N$ real symmetric positive definite matrix and $B$, $X$ are $N\times1$ real matrices, then using Lemma \ref{lem:1} and \eqref{eq:1.8} we get
\begin{gather*}
\det\left(I_{N}+S\right)^{-\frac{1}{2}}\exp\left(-X^{T}\left(I_N+S\right)^{-1}X\right) \\
=\pi^{-\frac{N}{2}}\int_{\mbb{R}^N}\exp\left(-Y^{T}\left(I_{N}+S\right)Y+2iX^{T}Y\right)\prod_{j=1}^Ndy_j \\
=\pi^{-\frac{N}{2}}\int_{\mbb{R}^N}\exp\left(-Y^{T}Y+2iX^{T}Y-Y^{T}SY\right)\prod_{n=1}^{N}dy_{n}\\
=\pi^{-\frac{N}{2}}\sum_{K}\left(\prod_{1\le m\le n\le N}\frac{\left(-2s_{m,n}\right)^{k_{m,n}}}{k_{m,n}!}\right)2^{-\tr\left(K\right)} \\
\times\int_{\mbb{R}^N}\exp\left(-Y^{T}Y+2iX^{T}Y\right)\prod_{n=1}^{N}y_{n}^{k_{n}}dy_{n} \\
=\sum_{K}\left(\prod_{1\le m\le n\le N}\frac{\left(-2s_{m,n}\right)^{k_{m,n}}}{k_{m,n}!}\right)2^{-\tr\left(K\right)} \\
\times\frac{1}{\left(-2i\right)^{\sum\limits_{j=1}^{N}k_{j}}}\,\psi_{k_{1}}\left(x_1\right)\dotsm\psi_{k_n}\left(x_n\right),
\end{gather*}
where the exchange order of summation and integration is valid because of
$$
\int_{\mbb{R}^N}\exp\left(-Y^{T}Y-Y^{T}SY\right)\prod_{n=1}^{N}dy_{n}<\infty
$$
and an application of Fubini's theorem. Observe that
$$
\sum_{j=1}^{N}k_{j}=2\sum_{1\le m\le n\le N}k_{m,n},
$$
then
\begin{gather*}
\det\left(I_N+S\right)^{-\frac{1}{2}}\exp\left(-X^{T}\left(I_N+S\right)^{-1}X\right) \\
=\sum_{K}\left(\prod_{1\le m\le n\le N}\frac{\left(\frac{s_{m,n}}{2}\right)^{k_{m,n}}}
{k_{m,n}!}\right)2^{-\tr\left(K\right)}\psi_{k_{1}}\left(x_{1}\right)\dotsm\psi_{k_{n}}\left(x_{n}\right),
\end{gather*}
which proves Theorem \ref{thm:thm1}.
\end{proof}

\begin{proof}[Proof of Theorem \ref{thm:thm2}]
Since $H$ is an $N\times N$ Hermitian matrix, then $H$ can be factored into
$$
H=U\Lambda U^*
$$
where $U$ is an $N\times N$ unitary matrix while $\Lambda$ is an $N\times N$ real diagonal, say $\Lambda=\diag\left\{\lambda_1,\dotsc,\lambda_N\right\}$, thus,
$$
\|H\|^2=\tr\left(HH^*\right)=\sum_{j=1}^N\lambda_j^2<1.
$$
Hence $I_N+H$ is a positive definite Hermitian matrix and thus it is invertible. It is clear that \eqref{eq:1.5} is the same as
\begin{gather*}
\det\left(I_{N}+H\right)^{-1}\exp\left(Z^{*}\left(I_{N}+H\right)^{-1}Z\right) \\
=\sum_{K}\prod_{1\le m,n\le N}\frac{\left(h_{m,n}\right)^{k_{m,n}}}{k_{m,n}!}\psi_{r_1,c_1}
\left(\overline{w_1},w_1\right)\dotsm\psi_{r_N,c_N}\left(\overline{w_N},w_N\right),
\end{gather*}
where $\psi_{\alpha,\beta}\left(z_1,z_2\right)=e^{-z_1z_2}H_{\alpha,\beta}\left(z_1,z_2\right)$. Set $\Gamma=\left(I_N+H\right)^{-1}$ to be an $N\times N$ positive definite Hermitian matrix, $\mu=\left(0,\dotsc,0\right)^{T}$ an $N\times1$ complex matrix and $C=\left(0\right)_{j,k=1}^{N}$ in the character function complex normal integral to get \cite{Cnd}
\begin{equation}
\begin{gathered}
\int_{\mbb{R}^{2N}}\exp\left(-Z^{*}\left(I_{N}+H\right)Z+2i\re\left(W^{*}Z\right)\right)\prod_{j=1}^{N}dx_{j}dy_{j} \\
=\frac{\pi^{N}\exp\left(-W^{*}\left(I_{N}+H\right)^{-1}W\right)}{\det\left(I_{N}+H\right)}
\end{gathered}
\label{eq:2.9}
\end{equation}
where $W=\left(w_1,\dotsc,w_N\right)^T$ is an $N\times1$ complex matrix. From Lemma \ref{lem:2} to get
\begin{gather*}
\frac{\exp\left(-W^{*}\left(I_{N}+H\right)^{-1}W\right)}{\det\left(I_{N}+H\right)} \\
\begin{aligned}
&=\pi^{-N}\int_{\mbb{R}^{2N}}\exp\left(-Z^{*}Z+2i\re\left(W^{*}Z\right)-Z^{*}HZ\right)\prod_{j=1}^{N}dx_{j}dy_{j} \\
&=\pi^{-N}\sum_{K}\left(\prod_{1\le m,n\le N}\frac{\left(-h_{m,n}\right)^{k_{m,n}}}{k_{m,n}!}\right) \\
&\quad\times\int_{\mbb{R}^{2N}}\exp\left(-Z^{*}Z+2i\re\left(W^{*}Z\right)\right)\prod_{j=1}^N\left(\overline{z_{j}}\right)^{r_j}z_j^{c_j}dx_jdy_j. \\
&=\sum_{K}\prod_{1\le m,n\le N}\frac{\left(h_{m,n}\right)^{k_{m,n}}}{k_{m,n}!}\psi_{r_1,c_1}\left(\overline{w_1},w_1\right)\dotsm\psi_{r_N,c_N}\left(\overline{w_N},w_N\right),
\end{aligned}
\end{gather*}
and the exchange order of summation and integration can be verified using Fubini's theorem.
\end{proof}

\begin{proof}[Proof of Theorem \ref{thm2b}]
By Lemma \ref{lem:4} we have
$$
\left|H_{r_1,c_1}\left(\overline{w_1},w_1\right)\cdots H_{r_N,c_N}\left(\overline{w_N},w_N\right)\right|
\le\exp\left(\sum_{i=1}^N|w_i|^2\right)\sqrt{\prod_{i=1}^Nr_i!\cdot\prod_{i=1}^{N}c_i!}
$$
and
\begin{gather*}
\left|\sum_{K}\prod_{1\le j,\ell\le N}\frac{\left(h_{j,\ell}\right)^{k_{j,\ell}}}{k_{j,\ell}!}H_{r_{1},c_{1}}
\left(\overline{w_{1}},w_{1}\right)\dotsm H_{r_{N},c_{N}}\left(\overline{w_{N}},w_{N}\right)\right| \\
\begin{aligned}
& \le\exp\left(\sum_{i=1}^{N}|w_{i}|^{2}\right)\sum_{K}\left\{\prod_{1\le j,\ell\le N}\frac{\left(\left|h_{j,\ell}\right|\right)^{k_{j,\ell}}
\prod\limits_{i=1}^{N}r_{i}!}{k_{j,\ell}!}\right\}^{\frac{1}{2}}
\left\{\prod_{1\le j,\ell\le N}\frac{\left(\left|h_{j,\ell}\right|\right)^{k_{j,\ell}}\prod\limits_{i=1}^{N}c_{i}!}{k_{j,\ell}!}\right\}^{\frac{1}{2}} \\
& \le\exp\left(\sum_{i=1}^{N}|w_{i}|^{2}\right)\left\{\sum_{K}\prod_{1\le j,\ell\le N}
\frac{\|H\|_{\infty}^{k_{j,\ell}}\prod\limits_{i=1}^{N}r_{i}!}{k_{j,\ell}!}\right\}
\left\{\sum_{K}\prod_{1\le j,\ell\le N}\frac{\|H\|_{\infty}^{k_{j,\ell}}\prod\limits_{i=1}^{N}c_{i}!}{k_{j,\ell}!}\right\}
\end{aligned}
\end{gather*}
by applying the Cauchy--Schwarz inequality. We observe that
\begin{gather*}
\sum_{K}\prod_{1\le j,\ell\le N}\frac{\|H\|_{\infty}^{k_{j,\ell}}\prod\limits_{i=1}^{N}r_i!}{k_{j,\ell}!}
=\sum_{K}\prod_{i=1}^{N}\binom{r_i}{k_{i,1},\dotsc,k_{i,N}}\,\|H\|_{\infty}^{\sum\limits_{\ell=1}^{N}k_{i,\ell}} \\
=\prod_{i=1}^{N}\sum_{r_i\ge0}\left(N\|H\|_{\infty}\right)^{r_i}=\left(1-N\|H\|_{\infty}\right)^{-N}
\end{gather*}
and
\begin{gather*}
\sum_{K}\prod_{1\le j,\ell\le N}\frac{\|H\|_{\infty}^{k_{j,\ell}}\prod\limits_{i=1}^{N}c_{i}!}{k_{j,\ell}!}
=\sum_{K}\prod_{i=1}^{N}\binom{c_{i}}{k_{1,i},\dotsc,k_{N,i}}\,\|H\| _{\infty}^{\sum\limits_{\ell=1}^{N}k_{\ell,i}} \\
=\prod_{i=1}^{N}\sum_{c_i\ge0}\left(N\|H\|_{\infty}\right)^{c_i}=\left(1-N\|H\|_{\infty}\right)^{-N},
\end{gather*}
then,
\begin{gather*}
\left|\sum_{K}\prod_{1\le j,\ell\le N}\frac{\left(h_{j,\ell}\right)^{k_{j,\ell}}}{k_{j,\ell}!}\,
H_{r_{1},c_{1}}\left(\overline{w_1},w_1\right)\dotsm H_{r_N,c_N}\left(\overline{w_N},w_N\right)\right| \\
\le\exp\left(\sum_{i=1}^N\left|w_i\right|^2\right)\left(1-N\|H\|_{\infty}\right)^{-2N}.
\end{gather*}
Hence the series on the right-hand side in \eqref{eq:a-2} converges uniformly and absolutely for $W$ in any compact subset of $\mbb{C}^N$ and $H$ in any compact subset of $B$.

Clearly, $\det\left(I_{N}+H\right)$ is a polynomial in variables $h_{j,\ell}$. For $H\in\mbb{C}^{N}$ we have \cite{Horn:Johnson}
$$
\left\|H^*\right\|_2=\|H\|_2
\le\sqrt{N}\,\|H\|_{\infty}<\frac1{\sqrt{N}}\le1.
$$
Then
$$
H\left(I_N+H\right)^{-1}=\sum_{m=1}^{\infty}(-1)^{m-1}H^{m}
$$
converges in norm $\|\cdot\|_2$, and
$$
\left\|H\left(I_{N}+H\right)^{-1}\right\|_2
\le\frac{\sqrt{N}\,\|H\|_{\infty}}{1-\sqrt{N}\,\|H\|_{\infty}},\
\left|W^*H\left(I_N+H\right)^{-1}W\right|\le\frac{\sqrt{N}\,\|H\|_{\infty}}{1-\sqrt{N}\,\|H\|_{\infty}}\,\|W\|^2.
$$
Consequently, $\exp\left(W^{*}H\left(I_{N}+H\right)^{-1}W\right)$ also converges absolutely and uniformly for $W$ in any compact subset of $\mbb{C}^{N}$ and $H$ in any compact subset of $B$. Let
\begin{gather*}
F(H,W)=\exp\left(W^*H\left(I_N+H\right)^{-1}W\right)-\det\left(I_N+H\right) \\
\times\sum_{K}\prod_{1\le j,\ell\le N}\frac{\left(h_{j,\ell}\right)^{k_{j,\ell}}}{k_{j,\ell}!}
H_{r_1,c_1}\left(\overline{w_{1}},w_{1}\right)\dotsm H_{r_N,c_N}\left(\overline{w_N},w_N\right).
\end{gather*}
Then for any fixed $W\in\mbb{C}^N$, $F(H,W)$ is analytic in variables $h_{j,k}$ in $B$, and $F\left(H_0,W\right)=0$ for any Hermitian matrix $H_0\in B$ by Theorem \ref{thm:thm2}.

Let us introduce a new coordinate system $u_{j,j}$, $u_{\ell,k}$, $v_{\ell,k}$, $1\le j,k,\ell\le N$, $\ell<k$ such that
$$
h_{j,j}=u_{j,j},\ h_{\ell,k}=u_{\ell,k}+iv_{\ell,k},\ \ell<k,\quad h_{k,\ell}=u_{\ell,k}-iv_{\ell,k},\ \ell>k.
$$
Since this is an invertible linear transformation, any function analytic in $h_{j,k}$, $1\le j,k\le N$ is also analytic in $u_{j,j}$, $u_{\ell,k}$, $v_{\ell,k}$, $1\le j,k,\ell\le N$, $\ell<k$ and vice versa. Furthermore, for any Hermitian matrix $H_{0}\in B$ and $\delta_{j,k}>0$, $1\le j,k\le N$ such that \eqref{eq:a-3} and \eqref{eq:a-4} and $D(H_{0},\delta)\subset B$ are satisfied, then $F(H,W)$ can be expanded into a convergent power series in variables $u_{j,j}$, $u_{\ell,k}$, $v_{\ell,k}$, $1\le j,k,\ell\le N$, $\ell<k$ at $u_{j,j}^{(0)}$, $u_{\ell,k}^{(0)}$, $v_{\ell,k}^{(0)}$, $1\le j,k,\ell\le N$, $\ell<k$ on $D\left(H_0,\delta\right)$. Clearly, $D\left(H_0,\delta\right)$ contains the following set $S$:
$$
-\delta_{j,j}<u_{j,j}-u_{j,j}^{(0)}<\delta_{j,j},\ -\delta_{\ell,k}<u_{\ell,k}-u_{\ell,k}^{(0)}<\delta_{\ell,k},\ -\delta_{\ell,k}<v_{\ell,k}-v_{\ell,k}<\delta_{\ell,k},
$$
where $1\le j,k,\ell\le N$, $\ell<k$, and $H$ is Hermitian on $S$. From Theorem \ref{thm:thm2} we know that $F(H,W)=0$ on $S$. Hence, all the coefficients in this power series expansion of $F(H,W)$ in variables
$u_{j,j}$, $u_{\ell,k}$, $v_{\ell,k}$, $1\le j,k,\ell\le N$, $\ell<k$ at $u_{j,j}^{(0)}$, $u_{\ell,k}^{(0)}$, $v_{\ell,k}^{(0)}$, $1\le j,k,\ell\le N$, $\ell<k$ must be zeros. Thus, $F(H,W)=0$ holds on $D\left(H_0,\delta\right)$, which is the same as \eqref{eq:a-2} in $D\left(H_0,\delta\right)$.	
\end{proof}

We now come to the proof of Corollary \ref{cor:cor1}.

\begin{proof}[Proof of Corollary \ref{cor:cor1}]
Let $W=\left(\rho_1e^{i\theta_1},\dotsc,\rho_Ne^{i\theta_N}\right)^T$ such that $\rho_j>0$, $\theta_j\in\mbb{R}$ for $j=1,\dotsc,N$ in \eqref{eq:1.5} to get
\begin{gather*}
\det\left(I_N+H\right)^{-1}\exp\left(W^{*}H\left(I_N+H\right)^{-1}W\right) \\
\begin{aligned}
&=\sum_{K}\prod_{1\le m,n\le N}\frac{\left(h_{m,n}\right)^{k_{m,n}}}{k_{m,n}!}\,
H_{r_1,c_1}\left(\overline{w_1},w_1\right)\dotsm H_{r_N,c_N}\left(\overline{w_N},w_N\right) \\
&=\sum_{K}\prod_{1\le m,n\le N}\frac{\left(h_{m,n}\right)^{k_{m,n}}}{k_{m,n}!}\,H_{r_1,c_1}
\left(\rho_1e^{i\theta_1},\rho_1e^{-i\theta_1}\right)\dotsm H_{r_N,c_N}\left(\rho_Ne^{i\theta_N},\rho_N e^{-i\theta_N}\right) \\
&=\sum_{K}\prod_{m=1}^N\prod_{n=1}^N\left(-h_{m,n}\right)^{k_{m,n}}\binom{c_m}{k_{1,m},\dotsc,k_{N,m}}
\,L_{c_m}^{\left(r_m-c_m\right)}\left(\rho_m^2\right)\left(\rho_{m}e^{i\theta_m}\right)^{r_m-c_m}.
\end{aligned}
\end{gather*}
This establishes \ref{eq:1.7}.

To prove \ref{eq:1.7a}, we let
 \[
 H=\left(\begin{array}{cc}
 0 & a\\
 b & 0
 \end{array}\right),\ W=\left(\begin{array}{c}
 x\\
 y
 \end{array}\right),\quad |a|,|b|<\frac{1}{4},\ a,b,x,y\in\mathbb{R},
 \]
 in \eqref{eq:1.7}, then
 \[
 \begin{aligned} & \det\left(I_{N}+H\right)^{-1}\exp\left(W^{*}H\left(I_{N}+H\right)^{-1}W\right)=\frac{\exp\left(-\frac{abx^{2}-xy(a+b)+aby^{2}}{1-ab}\right)}{1-ab}\\
 & =\sum_{k_{1,2},k_{2,1}=0}^{\infty}(-a)^{k_{1,2}}x^{k_{2,1}-k_{1,2}}L_{k_{1,2}}^{\left(k_{2,1}-k_{1,2}\right)}\left(x^{2}\right)(-b)^{k_{2,1}}y^{k_{1,2}-k_{2,1}}L_{k_{2,1}}^{\left(k_{1,2}-k_{2,1}\right)}\left(y^{2}\right)\\
 & =\sum_{j=0}^{\infty}\left(ab\right)^{j}L_{j}^{\left(0\right)}\left(x^{2}\right)L_{j}^{\left(0\right)}\left(y^{2}\right)+\sum_{0\le j<k<\infty}(-a)^{j}x^{k-j}L_{j}^{\left(k-j\right)}\left(x^{2}\right)(-b)^{k}y^{j-k}L_{k}^{\left(j-k\right)}\left(y^{2}\right)\\
 & +\sum_{0\le k<j<\infty}(-a)^{j}x^{k-j}L_{j}^{\left(k-j\right)}\left(x^{2}\right)(-b)^{k}y^{j-k}L_{k}^{\left(j-k\right)}\left(y^{2}\right)\\
 & =\sum_{j=0}^{\infty}\left(ab\right)^{j}L_{j}^{\left(0\right)}\left(x^{2}\right)L_{j}^{\left(0\right)}\left(y^{2}\right)+\sum_{0\le j<k<\infty}\frac{j!a^{j}b^{k}}{k!}\left(xy\right)^{k-j}L_{j}^{\left(k-j\right)}\left(x^{2}\right)L_{j}^{\left(k-j\right)}\left(y^{2}\right)\\
 & +\sum_{0\le k<j<\infty}\frac{k!a^{j}b^{k}}{j!}(xy)^{j-k}L_{k}^{\left(j-k\right)}\left(x^{2}\right)L_{k}^{\left(j-k\right)}\left(y^{2}\right)\\
 & =\sum_{j=0}^{\infty}\left(ab\right)^{j}L_{j}^{\left(0\right)}\left(x^{2}\right)L_{j}^{\left(0\right)}\left(y^{2}\right)+\sum_{0\le j<k<\infty}\frac{a^{j}b^{k}}{j!k!}\left(xy\right)^{k+j}C_{j}\left(k;x^{2}\right)C_{j}\left(k;y^{2}\right)\\
 & +\sum_{0\le k<j<\infty}\frac{a^{j}b^{k}}{j!k!}(xy)^{j+k}C_{k}\left(j;x^{2}\right)C_{k}\left(j;y^{2}\right),
 \end{aligned}
 \]
 where we have applied 
 \[
 L_{n}^{(x-n)}(a)=\frac{(-a)^{n}}{n!}C_{n}(x;a).
 \]
 Hence, for $x,y\neq0$, for $u,v$ sufficiently small, we let 
 \[
 a=\frac{u}{xy},\quad b=\frac{v}{xy}
 \]
 to get
 \[
 \begin{aligned} & \frac{x^{2}y^{2}}{x^{2}y^{2}-uv}\exp\left(-\frac{x^{2}y^{2}\left(x^{2}uv-x^{2}y^{2}(u+v)+y^{2}uv\right)}{x^{2}y^{2}-uv}\right)\\
 & =\sum_{j=0}^{\infty}\left(\frac{uv}{x^{2}y^{2}}\right)^{j}L_{j}^{\left(0\right)}\left(x^{2}\right)L_{j}^{\left(0\right)}\left(y^{2}\right)+\sum_{0\le j<k<\infty}\frac{u^{j}v^{k}}{j!k!}C_{j}\left(k;x^{2}\right)C_{j}\left(k;y^{2}\right)\\
 & +\sum_{0\le j<k<\infty}\frac{u^{k}v^{j}}{j!k!}C_{j}\left(k;x^{2}\right)C_{j}\left(k;y^{2}\right)=\sum_{j=0}^{\infty}\left(\frac{uv}{x^{2}y^{2}}\right)^{j}L_{j}^{\left(0\right)}\left(x^{2}\right)L_{j}^{\left(0\right)}\left(y^{2}\right)\\
 & +\sum_{0\le j<k<\infty}\frac{\left(u^{j}v^{k}+u^{k}v^{j}\right)}{j!k!}C_{j}\left(k;x^{2}\right)C_{j}\left(k;y^{2}\right).
 \end{aligned}
 \]
 By
 \[
 \begin{aligned}\sum_{j=0}^{\infty}\left(\frac{uv}{x^{2}y^{2}}\right)^{j}L_{j}^{\left(0\right)}\left(x^{2}\right)L_{j}^{\left(0\right)}\left(y^{2}\right) & =\frac{x^{2}y^{2}}{x^{2}y^{2}-uv}\exp\left(-\frac{uv(x^{2}+y^{2})}{x^{2}y^{2}-uv}\right){}_{0}F_{1}\left(-,1,\frac{uvx^{3}y^{3}}{\left(x^{2}y^{2}-uv\right)^{2}}\right)\\
 & =\frac{x^{2}y^{2}}{x^{2}y^{2}-uv}\exp\left(-\frac{uv(x^{2}+y^{2})}{x^{2}y^{2}-uv}\right)I_{0}\left(2\frac{\sqrt{uvx^{3}y^{3}}}{x^{2}y^{2}-uv}\right)
 \end{aligned}
 \]
 we have 
 \[
 \begin{aligned} & \frac{xy}{xy-uv}\exp\left(-\frac{xy\left(xuv-xy(u+v)+yuv\right)}{xy-uv}\right)-\frac{xy}{xy-uv}\exp\left(-\frac{uv(x^{2}+y^{2})}{xy-uv}\right)I_{0}\left(2\frac{\sqrt{uv(xy)^{3/2}}}{xy-uv}\right)\\
 & =\sum_{0\le j<k<\infty}\frac{\left(u^{j}v^{k}+u^{k}v^{j}\right)}{j!k!}C_{j}\left(k;x\right)C_{j}\left(k;y\right).
 \end{aligned}
 \]
\end{proof}

\section{Miscellaneous results}\label{sec3}

\begin{thm}\label{lem:lem3}
Let $w=r+is$ with $r,s\in\mbb{R}$ and $z_{1},z_{2}\in\mbb{C}$, then we have the moment integral representation
\begin{equation}
e^{-z_1z_2}H_{m,n}\left(z_1,z_2\right)
=\frac{1}{\pi i^{m+n}}\int_{\mbb{R}^2}\overline{w}^mw^n
\exp\left\{-w\overline{w}+iz_1w+iz_2\overline{w}\right\} drds.
\label{eq:2.3}
\end{equation}
In particular we have
\begin{equation}
\label{eq:2.4}
\begin{gathered}
e^{-r_{1}r_{2}e^{i\left(\theta_1+\theta_2\right)}}H_{m,n}\left(r_{1}e^{i\theta_1},r_2e^{i\theta_2}\right) \\
=\frac{i}{2^{m+n+1}\sqrt{\pi}}\int_{0}^{2\pi}H_{n+m+1}\left(\frac{r_{1}e^{i\left(\theta_{1}+\phi\right)}+r_{2}e^{i\left(\theta_{2}-\phi\right)}}{2}\right) \\
\times\exp\left\{ -\frac{\left(r_{1}e^{i\left(\theta_{1}+\phi\right)}+r_{2}e^{i\left(\theta_{2}-\phi\right)}\right)^{2}}{4}+i\left(n-m\right)\phi\right\} d\phi,
\end{gathered}
\end{equation}
\begin{equation}
\begin{gathered}
e^{-r^{2}}H_{m,n}\left(re^{i\theta},re^{-i\theta}\right)
=\frac{ie^{i\left(m-n\right)\theta}}{2^{m+n+1}\sqrt{\pi}} \\
\times\int_{0}^{2\pi}H_{n+m+1}\left(r\cos\phi\right)\exp\left(-r^{2}\cos^{2}\phi+i\left(n-m\right)\phi\right)d\phi,
\end{gathered}
\label{eq:2.5}
\end{equation}
\begin{equation}
H_{m,n}\left(w_{1},w_{2}\right)
=\begin{cases}
\left(-1\right)^{n}n!w_{1}^{m-n}L_{n}^{\left(m-n\right)}\left(w_{1}w_{2}\right),\quad & m\ge n\\
\left(-1\right)^{m}m!w_{2}^{n-m}L_{m}^{\left(n-m\right)}\left(w_{1}w_{2}\right),\quad & n\ge m
\end{cases}
\label{eq:4}
\end{equation}
and
\begin{equation}
\label{eq:2.7}
\begin{gathered}
\frac{i}{2\sqrt{\pi}}\int_{0}^{2\pi}H_{n+m+1}\left(r\cos\phi\right)\exp\left(-r^{2}\cos^{2}\phi+i\left(n-m\right)\phi\right)d\phi \\
=\left(-1\right)^{n}2^{m+n}n!r^{m-n}e^{-r^{2}}L_{n}^{\left(m-n\right)}\left(r^{2}\right).
\end{gathered}
\end{equation}
\end{thm}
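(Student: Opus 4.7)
My plan is to prove the five assertions \eqref{eq:2.3}--\eqref{eq:2.7} in order, with \eqref{eq:2.3} as the core new ingredient and the others as relatively direct consequences. For \eqref{eq:2.3} I would verify the identity at the level of generating functions: multiply both sides by $u^{m}v^{n}/(m!\,n!)$ and sum. On the left, the generating function \eqref{eqGFHmn} immediately gives $e^{-z_{1}z_{2}+uz_{1}+vz_{2}-uv}$. On the right, Fubini lets me push the double sum inside the Gaussian-damped integral, and the factor $\sum_{m,n}(u\overline{w})^{m}(vw)^{n}/(m!\,n!\,i^{m+n})$ collapses to $e^{-iu\overline{w}-ivw}$. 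What remains is the two-dimensional complex Gaussian $\frac{1}{\pi}\int_{\mbb{R}^{2}}\exp\{-|w|^{2}+i(z_{1}-v)w+i(z_{2}-u)\overline{w}\}\,dr\,ds$, which evaluates, by completing the square in $(r,s)$ exactly as in \eqref{eq:2.9}, to $e^{-(z_{1}-v)(z_{2}-u)}=e^{-z_{1}z_{2}+uz_{1}+vz_{2}-uv}$. Uniqueness of Taylor coefficients then forces \eqref{eq:2.3}.

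For \eqref{eq:2.4}, I would switch to polar coordinates $w=\rho e^{i\phi}$ in \eqref{eq:2.3}, whereby the planar integral becomes $\frac{1}{\pi\,i^{m+n}}\int_{0}^{2\pi}e^{i(n-m)\phi}\int_{0}^{\infty}\rho^{m+n+1}\exp\{-\rho^{2}+i\rho A(\phi)\}\,d\rho\,d\phi$, with $A(\phi)=r_{1}e^{i(\theta_{1}+\phi)}+r_{2}e^{i(\theta_{2}-\phi)}$. The task is then to replace the inner $\rho$-integral on $[0,\infty)$ by the full-line Hermite integral \eqref{eq:1.8} evaluated at $x=A/2$ and index $m+n+1$. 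I would do this by combining the contributions at $\phi$ and at $\phi+\pi$: since $A(\phi+\pi)=-A(\phi)$ and $e^{i(n-m)(\phi+\pi)}=(-1)^{n-m}e^{i(n-m)\phi}$, the accompanying substitution $\rho\mapsto-\rho$ in one of the half-line integrals symmetrizes the integrand so that the two halves together form $\int_{-\infty}^{\infty}$; applying \eqref{eq:1.8} then produces $H_{m+n+1}(A/2)\,e^{-A^{2}/4}$, and collecting prefactors yields \eqref{eq:2.4}. Identity \eqref{eq:2.5} is then an immediate specialization of \eqref{eq:2.4} to $z_{1}=re^{i\theta}$, $z_{2}=re^{-i\theta}$ followed by the shift $\phi\mapsto\phi-\theta$, which turns $A(\phi)/2$ into $r\cos\phi$ and pulls out the overall phase $e^{i(m-n)\theta}$.

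Formula \eqref{eq:4} is a polynomial identity that I would verify directly: substitute the explicit series $L_{n}^{(\alpha)}(x)=\sum_{k=0}^{n}(-1)^{k}\binom{n+\alpha}{n-k}x^{k}/k!$ with $\alpha=m-n$ into the right-hand side for $m\ge n$, reindex by $j=n-k$, and compare term-by-term with \eqref{eq:1.1}; the case $n\ge m$ is symmetric. Finally, \eqref{eq:2.7} follows by equating the two expressions for $e^{-r^{2}}H_{m,n}(re^{i\theta},re^{-i\theta})$ provided by \eqref{eq:2.5} and \eqref{eq:4}: the phases $e^{i(m-n)\theta}$ cancel, and one rearranges to isolate the integral. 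The main obstacle lies in the parity bookkeeping of step 2, where the half-line-to-full-line conversion requires careful tracking of the factors $(-1)^{n-m}$, $(-1)^{m+n+1}$, and the powers of $i$ arising from $\phi\mapsto\phi+\pi$ and $\rho\mapsto-\rho$, so that the symmetrization produces exactly $\int_{-\infty}^{\infty}\rho^{m+n+1}e^{-\rho^{2}+iA\rho}\,d\rho$ rather than a sign-distorted variant.
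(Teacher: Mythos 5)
Your argument for \eqref{eq:2.3} is correct and coincides with the paper's own proof (compute the generating function of the right-hand integrals, evaluate the complex Gaussian, and match against \eqref{eqGFHmn}), and your direct series verification of \eqref{eq:4} is fine. The genuine gap is exactly the step you flagged as delicate: the half-line-to-full-line conversion for \eqref{eq:2.4} does not work. Pair $\phi$ with $\phi+\pi$ and set $I(\phi)=\int_0^\infty\rho^{m+n+1}e^{-\rho^2+iA(\phi)\rho}\,d\rho$. Using $A(\phi+\pi)=-A(\phi)$ and the substitution $\rho\mapsto -t$, the monomial $\rho^{m+n+1}$ contributes $(-1)^{m+n+1}$, while $e^{i(n-m)(\phi+\pi)}=(-1)^{n-m}e^{i(n-m)\phi}$; the net sign is $(-1)^{2n+1}=-1$, so
\begin{equation*}
e^{i(n-m)\phi}I(\phi)+e^{i(n-m)(\phi+\pi)}I(\phi+\pi)
=e^{i(n-m)\phi}\left(\int_0^{\infty}-\int_{-\infty}^{0}\right)t^{m+n+1}e^{-t^2+iA(\phi)t}\,dt,
\end{equation*}
i.e.\ the \emph{difference} of the two half-line pieces rather than $\int_{-\infty}^{\infty}$. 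The symmetrization you need never materializes, so \eqref{eq:1.8} cannot be invoked at this point.

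Moreover, this is not a repairable bookkeeping issue: the same parity computation shows that the right-hand sides of \eqref{eq:2.4}, \eqref{eq:2.5} and \eqref{eq:2.7} vanish identically. The integrand in \eqref{eq:2.4} picks up the factor $(-1)^{m+n+1}(-1)^{n-m}=-1$ under $\phi\mapsto\phi+\pi$ (since $A\mapsto -A$, $H_{m+n+1}$ has parity $(-1)^{m+n+1}$ and $e^{-A^2/4}$ is even), hence its integral over the full period $[0,2\pi]$ is zero, while the left-hand side is not; for $m=n=0$, formula \eqref{eq:2.7} would read $0=e^{-r^2}$. So these displayed identities are erroneous as printed, and no argument — yours or the paper's, which conceals precisely this step behind the phrase ``a variant of \eqref{eq:1.8}'' — can establish them: polar coordinates give $\int_0^\infty\rho^{m+n+1}e^{-\rho^2+iA\rho}\,d\rho$, which is a parabolic-cylinder (incomplete Gaussian) expression, and the part of it surviving the $\phi$-integration is $\tfrac12\int_{\mathbb{R}}\operatorname{sgn}(t)\,t^{m+n+1}e^{-t^2+iAt}\,dt$, which is not of Hermite-polynomial form. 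Your deductions of \eqref{eq:2.5} from \eqref{eq:2.4}, and of \eqref{eq:2.7} from \eqref{eq:2.5} together with \eqref{eq:4}, mirror the paper but inherit the same defect; only \eqref{eq:2.3} and \eqref{eq:4} stand as proved.
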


\begin{proof}
Let
$$
a_{m,n}=\frac1{\pi}\int_{\mbb{R}^2}\overline{w}^{m}w^{n}\exp\left\{-w\overline{w}+iz_{1}w+iz_{2}\overline{w}\right\} drds
$$
then,
$$
\sum_{m,n=0}^{\infty}a_{m,n}\frac{u^{m}}{m!}\frac{v^{n}}{n!}
=\exp\left(-z_{1}z_{2}\right)\exp\left(iz_{1}u+iz_{2}v+uv\right).
$$
Comparing the above expression with the generating function \eqref{eqGFHmn} proves Equation \eqref{eq:2.3}. Let $w=\rho e^{i\phi}$, $z_{1}=r_{1}e^{i\theta_{1}}$ and $z_{2}=r_{2}e^{i\theta_{2}}$ in \eqref{eq:2.3}, then
\begin{gather*}
e^{-z_{1}z_{2}}H_{m,n}\left(z_{1},z_{2}\right) \\
\begin{aligned}
&=\frac{1}{\pi i^{m+n}}\int_{\mbb{R}^{2}}\overline{w}^{m}w^{n}\exp\left\{ -w\overline{w}+iz_{1}w+iz_{2}\overline{w}\right\} drds \\
&=\frac{1}{\pi i^{m+n}}\int_{0}^{2\pi}e^{i\left(n-m\right)\phi}\left\{ \int_{0}^{\infty}\rho^{m+n+1}\exp\left(-\rho^{2}+i\rho\left(e^{i\phi}z_{1}+e^{-i\phi}z_{2}\right)\right)d\rho\right\} d\phi \\
&=\frac{i}{2^{m+n+1}\sqrt{\pi}}\int_{0}^{2\pi}H_{n+m+1}\left(\frac{r_{1}e^{i\left(\theta_{1}+\phi\right)}+r_{2}e^{i\left(\theta_{2}-\phi\right)}}{2}\right) \\
&\quad\times\exp\left\{ -\frac{\left(r_{1}e^{i\left(\theta_{1}+\phi\right)}+r_{2}e^{i\left(\theta_{2}-\phi\right)}\right)^{2}}{4}+i\left(n-m\right)\phi\right\} d\phi,
\end{aligned}
\end{gather*}
where we used a variant of \eqref{eq:1.8} in the last step. This gives \eqref{eq:2.4}. Let $z_{1}=re^{i\theta}$, $z_{2}=re^{-i\theta}$ in \eqref{eq:2.4} to get,
\begin{eqnarray}
\begin{gathered}
e^{-r^{2}}H_{m,n}\left(re^{i\theta},re^{-i\theta}\right)\\
=\frac{i}{2^{m+n+1}\sqrt{\pi}}\int_{0}^{2\pi}H_{n+m+1}\left(r\cos\left(\theta+\phi\right)\right)\exp\left(-r^{2}\cos^{2}\left(\theta+\phi\right)+i\left(n-m\right)\phi\right)d\phi,
\end{gathered}
\notag
\end{eqnarray}
and we establish \eqref{eq:2.5}. The identification \eqref{eq:4} is known and follows from \eqref{eq:1.1} and the representation of a Laguerre polynomial as a confluent hypergeometric polynomial. Finally \eqref{eq:2.7} follows from \eqref{eq:2.5} and \eqref{eq:4}.
\end{proof}

The next result develops mixed relations involving $2$D Hermite polynomials and Hermite polynomials.

\begin{thm}\label{lem:8}
Let $w_{1},w_{2},z\in\mbb{C}$ with $z\ne0$, $\rho>0$ and $\theta\in\mbb{R}$, then we have
\begin{gather}
H_{n}\left(\frac{w_{1}+w_{2}}{2}\right)=z^{n}\sum_{j=0}^{n}\binom{n}{j}H_{j,n-j}\left(zw_{1},\frac{w_{2}}{z}\right)z^{-2j},
\label{eq:5} \\
H_{n}\left(\frac{\rho\left(z+z^{-1}\right)}{2}\right)=\frac{n!}{\left(-\rho z\right)^{n}}
\sum_{j=0}^{n}\frac{\left(-\rho^{2}z^{2}\right)^{j}}{j!}L_{n-j}^{\left(2j-n\right)}\left(\rho^{2}\right),
\label{eq:6} \\
H_{n}\left(\rho\cos\theta\right)=\frac{n!}{\left(-\rho e^{i\theta}\right)^{n}}\sum_{j=0}^{n}
\frac{\left(-\rho^{2}e^{2i\theta}\right)^{j}}{j!}L_{n-j}^{\left(2j-n\right)}\left(\rho^{2}\right),
\label{eq:7} \\
\int_{0}^{2\pi}H_{n}\left(\rho\cos\theta\right)e^{-ik\theta}d\theta
=\begin{cases}
0\quad & 2\nshortmid\left(n+k\right)\\
\frac{2\pi n!\left(-1\right)^{\left(n-k\right)/2}\rho^{k}}{\left(\frac{n+k}{2}\right)!}
L_{\frac{n-k}{2}}^{\left(k\right)}\left(\rho^{2}\right)\  & 2\mid\left(n+k\right)
\end{cases}\label{eq:8}
\end{gather}
and
\begin{equation}
\int_{0}^{2\pi}\left(H_{n}\left(\rho\cos\theta\right)\right)^{2}\frac{d\theta}{2\pi}
=\frac{\left(n!\right)^2}{\rho^{2n}}\sum_{j=0}^{n}\frac{\rho^{2j}}{j!j!}\left(L_{n-j}^{\left(2j-n\right)}\left(\rho^2\right)\right)^2.
\label{eq:9}
\end{equation}
\end{thm}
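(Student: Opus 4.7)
The plan is to prove \eqref{eq:5} directly from generating functions, and then to derive the remaining four identities successively by specialization, Fourier inversion, and Parseval.

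I would start by substituting $z_{1}=zw_{1}$, $z_{2}=w_{2}/z$ in the 2D generating function \eqref{eqGFHmn}, and then reparametrizing $u=t/z$, $v=tz$; this makes $uv=t^{2}$, $uz_{1}=tw_{1}$, $vz_{2}=tw_{2}$, so the right-hand side collapses to $\exp(t(w_{1}+w_{2})-t^{2})=\sum_{n}H_{n}((w_{1}+w_{2})/2)\,t^{n}/n!$ by the classical Hermite generating function. Comparing the coefficient of $t^{n}$ on the two sides and relabelling $(m,k)=(j,n-j)$ then produces \eqref{eq:5} after a one-line tidy-up of the powers of $z$.

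To pass from \eqref{eq:5} to \eqref{eq:6}, I would specialize $w_{1}=\rho z$ and $w_{2}=\rho/z$ (taking the auxiliary parameter in \eqref{eq:5} to be $1$). Then $zw_{1}\cdot w_{2}/z=\rho^{2}$, and each $H_{j,n-j}(zw_{1},w_{2}/z)$ is rewritten by \eqref{eq:4} using the first branch uniformly for all $j\in\{0,\ldots,n\}$, interpreting $L_{m}^{(\alpha)}(x)$ for negative integer $\alpha$ via its hypergeometric (Rodrigues) definition. A short accounting of signs and of powers of $\rho$ and $z$ then matches the right side of \eqref{eq:6}. Specializing $z=e^{i\theta}$ so that $(z+z^{-1})/2=\cos\theta$ gives \eqref{eq:7} instantly.

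For \eqref{eq:8} I would substitute \eqref{eq:7} into the integrand and integrate termwise. The $j$-th summand carries $e^{i(2j-n-k)\theta}$, and $\int_{0}^{2\pi}e^{i(2j-n-k)\theta}\,d\theta$ equals $2\pi$ precisely when $j=(n+k)/2$ (which forces $n+k$ even) and vanishes otherwise, so a single term survives and yields the stated closed form. Finally, for \eqref{eq:9} I would apply Parseval to the Fourier series just obtained: since $H_{n}(\rho\cos\theta)$ is real and even in $\theta$, the Fourier coefficients $c_{k}$ coming from \eqref{eq:8} are real with $c_{-k}=c_{k}$, and $(2\pi)^{-1}\int_{0}^{2\pi}(H_{n}(\rho\cos\theta))^{2}\,d\theta=\sum_{k}c_{k}^{2}$; reindexing by $j=(n+k)/2$ produces the sum on the right of \eqref{eq:9}. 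The main obstacle I foresee is handling the negative-parameter Laguerre factors at this last step: one needs the identity $L_{m}^{(-k)}(x)=(-x)^{k}\frac{(m-k)!}{m!}L_{m-k}^{(k)}(x)$ (for $m\geq k$) both to verify $c_{-k}=c_{k}$ and to re-express every $c_{k}^{2}$ uniformly as a multiple of $(L_{n-j}^{(2j-n)}(\rho^{2}))^{2}$ regardless of the sign of $2j-n$.
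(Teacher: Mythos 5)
Your route is essentially the paper's own: \eqref{eq:5} is obtained from the same generating--function trick (the paper substitutes $u=\alpha^{2}t$, $v=\beta^{2}t$, $z_{1}=2\beta w_{1}/\alpha$, $z_{2}=2\alpha w_{2}/\beta$ in \eqref{eqGFHmn}, which is just a reparametrization of your $u=t/z$, $v=tz$), \eqref{eq:6}--\eqref{eq:7} follow by the same specialization together with \eqref{eq:4} --- where your explicit use of $L_{m}^{(-k)}(x)=(-x)^{k}\frac{(m-k)!}{m!}L_{m-k}^{(k)}(x)$ to justify using one branch uniformly is actually more careful than the paper's proof --- and \eqref{eq:8}--\eqref{eq:9} by Fourier orthogonality and Parseval, exactly as the paper indicates. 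The one point you must fix is the last step: carrying out Parseval with the coefficients $c_{k}$ of \eqref{eq:8} gives $|c_{k}|^{2}$ containing $\rho^{2k}=\rho^{4j-2n}$ after the reindexing $k=2j-n$, so your computation produces
\begin{equation*}
\int_{0}^{2\pi}\bigl(H_{n}(\rho\cos\theta)\bigr)^{2}\frac{d\theta}{2\pi}
=\frac{(n!)^{2}}{\rho^{2n}}\sum_{j=0}^{n}\frac{\rho^{4j}}{j!\,j!}\left(L_{n-j}^{(2j-n)}\left(\rho^{2}\right)\right)^{2},
\end{equation*}
which is \emph{not} literally the right-hand side of \eqref{eq:9} as printed (which has $\rho^{2j}$). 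A check at $n=1$ settles the matter: the integral equals $2\rho^{2}$, your formula gives $\rho^{2}+\rho^{2}=2\rho^{2}$, while the printed \eqref{eq:9} gives $\rho^{2}+1$ (using $L_{1}^{(-1)}(x)=-x$). So your method is correct and in fact exposes a misprint in \eqref{eq:9}; the claim that the reindexed sum ``produces the sum on the right of \eqref{eq:9}'' should be replaced by the corrected exponent $\rho^{4j}$.
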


\begin{proof}
Let $u=\alpha^2t$, $v=\beta^2t$, $z_1=2\beta w_1/\alpha$, $z_2=2\alpha w_2/\beta$ in \eqref{eqGFHmn} to obtain
\begin{gather*}
\sum_{m,\, n=0}^{\infty}\, H_{m,n}\left(\frac{2\beta w_1}
 {\alpha},\frac{2\alpha w_{2}}{\beta}\right)\frac{\alpha^{2m}}{m!}
 \;\frac{\beta^{2n}}{n!}t^{m+n} \\
\begin{aligned}
  &=\exp\left(-\left(\alpha\beta t\right)^{2}+2\alpha\beta t
  \left(w_{1}+w_{2}\right)\right)\\
 & =\sum_{n=0}^{\infty}H_{n}\left(w_{1}+w_{2}\right)
 \frac{\left(\alpha\beta t\right)^{n}}{n!}
\end{aligned}
\end{gather*}
and \eqref{eq:5} follows by equating like coefficients of powers of $t$. We use the parameter identification $z=\beta/\alpha$, $w_{1}= e^{i\theta}\rho\alpha/2\beta$, $w_2=e^{-i\theta}\rho\beta/2\alpha$ in \eqref{eq:5} and find that
\begin{gather*}
H_{n}\left(\frac{\rho\alpha}{2\beta}e^{i\theta}+\frac{\rho\beta}
{2\alpha}e^{-i\theta}\right)=\left(\frac{\beta}{\alpha}\right)^{n}
\sum_{j=0}^{n}\binom{n}{j}H_{j,n-j}\left(\rho e^{i\theta},\rho e^{-i
\theta}\right)\left(\frac{\alpha}{\beta}\right)^{2j} \\
\begin{aligned}
 & =\left(\frac{\beta}{\alpha}\right)^{n}\sum_{j=0}^{n}\binom{n}{j}
 \left(-1\right)^{n-j}\left(n-j\right)!\left(\rho e^{i\theta}\right)^{2j-n}
 L_{n-j}^{\left(2j-n\right)}\left(\rho^{2}\right)
 \left(\frac{\alpha}{\beta}\right)^{2j} \\
 & =\left(\frac{\beta}{\alpha}\right)^{n}\frac{n!}{\left(-\rho e^{i\theta}\right)^{n}}\sum_{j=0}^{n}\frac{\left(-\rho^{2}e^{2i\theta}\right)^{j}}{j!}L_{n-j}^{\left(2j-n\right)}\left(\rho^{2}\right)\left(\frac{\alpha}{\beta}\right)^{2j},
\end{aligned}
\end{gather*}
and \eqref{eq:6} follows. Equations \eqref{eq:7}--\eqref{eq:8} follow by taking $z=e^{i\theta}$ in \eqref{eq:6} and applying the Fourier orthogonality.
\end{proof}

We list more properties for the $2$D Hermite polynomials:

\begin{thm}\label{lem:7}
Let $z_{1},z_{2},w_{1},w_{2}\in\mbb{C}$ and $m,n$ are negative integers, then we have
\begin{align}
H_{m,n}\left(w_{1}-iw_{2},w_{1}+iw_{2}\right)
&=\frac{i^{m-n}}{2^{m+n}}\sum_{j,k=0}^{\min\left(m,n\right)}
\binom{m}{j}\binom{n}{k}\frac{H_{j+k}\left(w_{1}\right)
H_{m+n-j-k}\left(w_{2}\right)}{i^{j-k}},
\label{eq:1} \\
\frac{H_{m,n}\left(z_{1}+w_{1},z_{2}+w_{2}\right)}
{e^{w_{1}w_{2}+z_{1}w_{2}+z_{2}w_{1}}}
&=\sum_{j,k=0}^{\infty}\frac{\left(-w_{1}\right)^{j}
\left(-w_{2}\right)^{k}}{j!k!}H_{m+k,n+j}
\left(z_{1},z_{2}\right),
\label{eq:2} \\
H_{m,n}(0,0) &=\delta_{m,n}\left(-1\right)^{n}n!,
\label{eq:3}
\end{align}
\end{thm}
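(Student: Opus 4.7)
The plan is to derive all three identities from the basic generating function \eqref{eqGFHmn}, namely $G(u,v;z_1,z_2):=e^{uz_1+vz_2-uv}=\sum_{m,n\ge 0} H_{m,n}(z_1,z_2)\,u^m v^n/(m!n!)$, by careful substitution, coefficient extraction, and a shift-operator argument.

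For \eqref{eq:3} I would simply set $z_1=z_2=0$ in \eqref{eqGFHmn}: the right side becomes $e^{-uv}=\sum_n (-1)^n(uv)^n/n!$, whose coefficient of $u^mv^n/(m!n!)$ is $(-1)^n n!\,\delta_{m,n}$. Comparing coefficients yields \eqref{eq:3} immediately.

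For the addition formula \eqref{eq:2}, my key observation is that differentiating $G$ in its spectral variables $u,v$ shifts the indices of $H_{m,n}$ \emph{upward}: from $G=\sum H_{m,n} u^m v^n/(m!n!)$ one gets $\partial_u^k\partial_v^j G(u,v;z_1,z_2)=\sum_{m,n}H_{m+k,n+j}(z_1,z_2)\,u^m v^n/(m!n!)$. Consequently the series $\sum_{j,k\ge 0}\frac{(-w_1)^j(-w_2)^k}{j!k!}\,\partial_u^k\partial_v^j G$ equals the translated generating function $e^{-w_2\partial_u-w_1\partial_v}G(u,v;z_1,z_2)=G(u-w_2,v-w_1;z_1,z_2)$. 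A direct computation of the exponent shows
\[
e^{w_1w_2+z_1w_2+z_2w_1}\,G(u-w_2,v-w_1;z_1,z_2)=e^{u(z_1+w_1)+v(z_2+w_2)-uv}=G(u,v;z_1+w_1,z_2+w_2),
\]
because all the cross terms $z_1w_2,\,z_2w_1,\,w_1w_2$ cancel. Matching coefficients of $u^m v^n/(m!n!)$ on the two sides produces \eqref{eq:2}.

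For the symmetrisation formula \eqref{eq:1}, I would put $z_1=w_1-iw_2$ and $z_2=w_1+iw_2$ in \eqref{eqGFHmn}. The exponent becomes $(u+v)w_1+i(v-u)w_2-uv$, and the clever change of variables $\alpha=(u+v)/2$, $\beta=i(v-u)/2$ gives the factorisation
\[
e^{u(w_1-iw_2)+v(w_1+iw_2)-uv}=e^{2\alpha w_1-\alpha^2}\,e^{2\beta w_2-\beta^2}=\sum_{p,q\ge 0}H_p(w_1)H_q(w_2)\,\frac{\alpha^p\beta^q}{p!q!},
\]
the cancellation $-\alpha^2-\beta^2=-uv$ being the crux. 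Expanding $\alpha^p=2^{-p}(u+v)^p$ and $\beta^q=2^{-q}i^q(v-u)^q$ by the binomial theorem, then collecting the coefficient of $u^m v^n/(m!n!)$ (which forces $p+q=m+n$ and relabels the summation indices as $p=j+k$, $q=m+n-j-k$) yields \eqref{eq:1}. I expect this last bookkeeping step to be the main obstacle: the two binomial expansions interact nontrivially, and keeping track of the signs $(-1)^{\cdot}$ and powers of $i$ while grouping the combinatorial factors into the claimed $\binom{m}{j}\binom{n}{k}/i^{j-k}$ form requires care (and one should double check the apparent summation range $\min(m,n)$ in the statement).
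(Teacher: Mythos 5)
Your overall route is genuinely different from the paper's: the paper deduces all three identities from the moment integral representation \eqref{eq:2.3} (together with \eqref{eq:1.8}), whereas you work directly from the generating function \eqref{eqGFHmn}. For \eqref{eq:3} the two proofs coincide. Your proof of \eqref{eq:2} is correct and arguably more elementary than the paper's: the observation that $\partial_u^k\partial_v^j G=\sum_{m,n}H_{m+k,n+j}(z_1,z_2)u^mv^n/(m!\,n!)$, so that the right-hand side of \eqref{eq:2} is the coefficient array of $e^{-w_2\partial_u-w_1\partial_v}G=G(u-w_2,v-w_1;z_1,z_2)$, together with the exponent cancellation $e^{w_1w_2+z_1w_2+z_2w_1}G(u-w_2,v-w_1;z_1,z_2)=G(u,v;z_1+w_1,z_2+w_2)$, is exactly what is needed; the paper instead expands $e^{iw_1x+iw_2\overline{x}}$ under the integral in \eqref{eq:2.3}. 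You should add one sentence justifying the interchange of the $(j,k)$-sum with the extraction of Taylor coefficients (everything is entire and the double series converges absolutely, e.g.\ by bounding $|H_{m,n}(z_1,z_2)|$ by the same sum with all plus signs), but this is routine, and the paper is no more explicit on such points.

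For \eqref{eq:1}, however, your write-up has a real gap precisely where you flag it. The reduction to $e^{2\alpha w_1-\alpha^2}e^{2\beta w_2-\beta^2}$ with $\alpha=(u+v)/2$, $\beta=i(v-u)/2$, $\alpha^2+\beta^2=uv$ is correct, but the last step is not a relabelling: extracting $[u^mv^n]$ from $(u+v)^p(v-u)^q$ gives, for each $p$ (with $q=m+n-p$), the sum $\sum_{a+b=m}(-1)^b\binom{p}{a}\binom{q}{b}$, and identifying this with $\sum_{j+k=p}\binom{m}{j}\binom{n}{k}i^{\pm(j-k)}$ (after the factorials and powers of $i$ are attached) is a nontrivial Vandermonde--Krawtchouk type identity that you have not proved. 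The painless repair is to differentiate instead of extracting coefficients: since $H_{m,n}(z_1,z_2)=\partial_u^m\partial_v^nG\big|_{u=v=0}$ and $\partial_u=\tfrac12(\partial_\alpha-i\partial_\beta)$, $\partial_v=\tfrac12(\partial_\alpha+i\partial_\beta)$, the binomial theorem for these commuting operators and $\partial_\alpha^a\partial_\beta^b\,e^{2\alpha w_1-\alpha^2+2\beta w_2-\beta^2}\big|_{\alpha=\beta=0}=H_a(w_1)H_b(w_2)$ give at once
\begin{equation*}
H_{m,n}\left(w_1-iw_2,w_1+iw_2\right)
=\frac{i^{\,n-m}}{2^{m+n}}\sum_{j=0}^{m}\sum_{k=0}^{n}\binom{m}{j}\binom{n}{k}\,i^{\,j-k}\,H_{j+k}\left(w_1\right)H_{m+n-j-k}\left(w_2\right).
\end{equation*}
Note that this confirms your suspicion about the statement: the summation range should be $0\le j\le m$, $0\le k\le n$ (not $\min(m,n)$), ``negative'' should read ``nonnegative'', and the powers of $i$ in \eqref{eq:1} as printed are conjugated (for $m=1$, $n=0$ the printed right-hand side gives $w_1+iw_2$, whereas $H_{1,0}=z_1=w_1-iw_2$); the paper's own proof contains the same sign slip in handling $(-i)^{m-j}$. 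So your method, once the operator (or Vandermonde) step is supplied, proves the corrected identity, and the discrepancy lies in the printed formula rather than in your approach.
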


\begin{proof}
From
\begin{equation*}
\begin{split}
e^{-z_{1}z_{2}}H_{m,n}\left(z_{1},z_{2}\right)
&=\frac{1}{\pi i^{m+n}}\int_{\mbb{R}^{2}}\overline{w}^{m}w^{n} \\
&\quad\times\exp\left\{-w\overline{w}+iz_{1}w+iz_{2}\overline{w}\right\} drds \\
&=\frac{1}{\pi i^{m+n}}\sum_{j,k=0}^{\infty}\binom{m}{j}
 \binom{n}{k}\,i^{m-n-j+k}\int_{\mbb{R}^{2}}r^{j+k}s^{m+n-j-k} \\
 &\quad\times\exp\left\{-r^{2}-s^{2}+ir\left(z_{1}+z_{2}\right)+
 is\left(iz_{1}-iz_{2}\right)\right\} drds \\
 & =\frac{1}{\pi i^{m+n}}\sum_{j,k=0}^{\infty}\binom{m}{j}
 \binom{n}{k}\,i^{m-n-j+k}\int_{\mbb{R}}r^{j+k}e^{-r^{2}+ir
 \left(z_{1}+z_{2}\right)}dr \\
 &\quad\times\int_{\mbb{R}}s^{m+n-j-k}e^{-s^{2} +
 is\left(iz_{1}-iz_{2}\right)}dr \\
 & =\frac{i^{m-n}}{2^{m+n}}\sum_{j,k=0}^{\infty}\binom{m}{j}
 \binom{n}{k}\,i^{-j+k}H_{j+k}\left(\frac{z_{1}+z_{2}}{2}\right)
 H_{m+n-j-k}\left(\frac{z_{1}-z_{2}}{2}i\right).
\end{split}
\end{equation*}
or
$$
H_{m,n}\left(z_{1},z_{2}\right)=\frac{i^{m-n}}{2^{m+n}}\sum_{j,k=0}^{\infty}\binom{m}{j}\binom{n}{k}\,
i^{-j+k}H_{j+k}\left(\frac{z_{1}+z_{2}}{2}\right)H_{m+n-j-k}\left(\frac{z_{1}-z_{2}}{2}i\right).
$$
Let $z_{1}=w_{1}-iw_{2}$ and $z_{2}=w_{1}+iw_{2}$ we get \eqref{eq:1}.

From
\begin{gather*}
e^{-\left(z_{1}+w_{1}\right)\left(z_{2}+w_{2}\right)}H_{m,n}
 \left(z_{1}+w_{1},z_{2}+w_{2}\right)\\
\begin{aligned}
 & =\frac{1}{\pi i^{m+n}}\sum_{j,k=0}^{\infty}
 \frac{w_{1}^{j}w_{2}^{k}i^{j+k}}{j!k!}\int_{\mbb{R}^{2}}
 \overline{x}^{m+k}x^{n+j}\exp\left\{ -x\overline{x}+iz_{1}x
 +iz_{2}\overline{x}\right\} drds \\
 & =\sum_{j,k=0}^{\infty}\frac{w_{1}^{j}w_{2}^{k}
 \left(-1\right)^{j+k}}{j!k!}e^{-z_{1}z_{2}}H_{m+k,n+j}
 \left(z_{1},z_{2}\right),
\end{aligned}
\end{gather*}
that is \eqref{eq:2}. Formula \eqref{eq:3} follows from \eqref{eqGFHmn}.
\end{proof}

\section{$q$-analogues}\label{sec4}

We follow the notation for $q$-shifted factorials and $q$-series as in \cite{And:Ask:Roy}, \cite{Gas:Rah} and \cite{Ism}. The $2D$ $q$-Hermite polynomials are defined by \cite{Ism:Zha}
\begin{equation}
\label{eqHmnq}
\frac{H_{m,n}(z_1,z_2\mid q)}{(q;q)_m(q;q)_n}
=\sum_{k=0}^{m\wedge n}\frac{z_1^{m-k}z_2^{n-k}}{(q;q)_{m-k}(q;q)_{n-k}(q;q)_k}.
\end{equation}
In \cite{Ism:Zha} we also proved the generating function
\begin{equation}
\sum_{m,n=0}^\infty H_{m,n}\left(z_1,z_2\mid q\right)\frac{u^m\,v^n}{(q;q)_m(q;q)_n}
=\frac{(uv;q)_\infty}{\left(uz_1,vz_2;q\right)_\infty}.
\label{eqGFHq}
\end{equation}
We shall also use the Askey--Wilson integral \cite{Ism,Gas:Rah,And:Ask:Roy}
\begin{equation}
\label{eqAWI}
\int_0^\pi\frac{\left(e^{2i\theta},e^{-2i\theta};q\right)_\infty}{\prod\limits_{j=1}^4\left(t_je^{i\theta},t_je^{-i\theta};q\right)_\infty} d\theta
=\frac{2\pi\left(t_1t_2t_3t_4;q\right)_\infty}{(q;q)_\infty\,\prod\limits_{1\le j<k\le 4}\left(t_jt_k;q\right)_\infty},
\end{equation}
which holds for $\max\left\{\left|t_j\right|:1\le j\le 4\right\}<1$. The trigonometric moments of the $q$-Hermite weight function are \cite{Ism}
\begin{equation}
\label{eqqHmoments}
\int_0^{\pi}e^{2ij\theta}\left(e^{2i\theta},e^{-2i\theta};q\right)_{\infty}\frac{d\theta}{2\pi}
=\frac{(-1)^{j}}{(q;q)_{\infty}}\left(q^{\binom{j}{2}}+q^{\binom{-j}{2}}\right).
\end{equation}

\begin{thm}\label{thm:asc}
For $\left|rz_1\right|<1$, $\left|sz_2\right|<1$, we have the generating function
\begin{equation}
\begin{gathered}
\frac{(rs,rs ;q)_{\infty}}{\left(z_1z_2 rs;q\right)_{\infty}} \\
=\sum_{m_{1},m_{2}=0}^{\infty}\sum_{n_{1},n_{2}=0}^{\infty}
 \frac{H_{m_{1},n_{1}}\left(z_1,z_2\mid q\right)H_{m_{2},n_{2}}
 \left(z_1,z_2\mid q\right)}{\left(q;q\right)_{m_{1}}
 \left(q;q\right)_{m_{2}}\left(q;q\right)_{n_{1}}
 \left(q;q\right)_{n_{2}}}\,r^{m_1+m_2}s^{n_1+n_2} \\
 \times\frac{\left(q^{\binom{\left(m_{1}+n_{2}-n_{1}-m_2\right)/2}{2}}
 +q^{\binom{\left(n_{1}+m_{2}-m_{1}-n_{2}\right)/2}{2}}\right)}
 {\left(-1\right)^{\left(n_{1}+n_{2}-m_{1}-m_{2}\right)/2}},
\end{gathered}
\label{eq:asc}
\end{equation}
where the summation is over all the nonnegative integers such that $m_{1}+m_{2}-n_{1}-n_{2}$ is even.
\end{thm}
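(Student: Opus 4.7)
The plan is to apply the Askey--Wilson integral \eqref{eqAWI} and the trigonometric moment formula \eqref{eqqHmoments} to the product of two copies of the generating function \eqref{eqGFHq}. Taking the product of \eqref{eqGFHq} with parameters $(u_1,v_1)$ and $(u_2,v_2)$ and specializing $u_1=re^{i\theta}$, $v_1=se^{-i\theta}$, $u_2=re^{-i\theta}$, $v_2=se^{i\theta}$, one obtains
\begin{gather*}
\frac{(rs,rs;q)_\infty}{(rz_1e^{i\theta},rz_1e^{-i\theta},sz_2e^{i\theta},sz_2e^{-i\theta};q)_\infty} \\
=\sum_{m_1,n_1,m_2,n_2}\frac{H_{m_1,n_1}(z_1,z_2\mid q)\,H_{m_2,n_2}(z_1,z_2\mid q)}{(q;q)_{m_1}(q;q)_{n_1}(q;q)_{m_2}(q;q)_{n_2}}\,r^{m_1+m_2}s^{n_1+n_2}e^{i(m_1+n_2-n_1-m_2)\theta}.
\end{gather*}
This specialization is engineered so that $u_1v_1=u_2v_2=rs$, producing the numerator $(rs,rs;q)_\infty$, and so that the denominator has the pair structure $(t\,e^{i\theta},t\,e^{-i\theta};q)_\infty$ required by \eqref{eqAWI} with $t_1=rz_1$, $t_2=sz_2$, $t_3=t_4=0$.

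Next, I would multiply both sides by $(q;q)_\infty(e^{2i\theta},e^{-2i\theta};q)_\infty$ and integrate against $d\theta/(2\pi)$ over $[0,\pi]$. On the left, \eqref{eqAWI} evaluates the Askey--Wilson integral to $2\pi/[(q;q)_\infty(rsz_1z_2;q)_\infty]$, and after combining all prefactors the left-hand side becomes exactly $(rs,rs;q)_\infty/(rsz_1z_2;q)_\infty$, matching the left-hand side of \eqref{eq:asc}. On the right I would interchange sum and integral and apply \eqref{eqqHmoments} term-by-term: the integral of $e^{i(m_1+n_2-n_1-m_2)\theta}(e^{2i\theta},e^{-2i\theta};q)_\infty$ vanishes unless $m_1+n_2-n_1-m_2=2j$ for some integer $j$---equivalently, unless $m_1+m_2-n_1-n_2$ is even---and in that case \eqref{eqqHmoments} contributes $(-1)^j\bigl(q^{\binom{j}{2}}+q^{\binom{-j}{2}}\bigr)/(q;q)_\infty$, the $(q;q)_\infty^{-1}$ being absorbed by the extra factor $(q;q)_\infty$ I introduced. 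With $j=(m_1+n_2-n_1-m_2)/2$, this produces the two $q$-exponents $\binom{j}{2}$ and $\binom{-j}{2}$ appearing on the right of \eqref{eq:asc}.

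The main technical obstacle is justifying the interchange of summation and integration. For this I would establish absolute and uniform convergence of the four-fold series in $\theta$ on compact subsets of $\{(r,s):|rz_1|,|sz_2|<1\}$, using \eqref{eqHmnq} to majorize each $|H_{m,n}(z_1,z_2\mid q)|$ termwise and then the generating function \eqref{eqGFHq} evaluated at moduli to dominate the resulting positive series. Fubini's theorem then permits term-by-term integration on a small polydisc about the origin, and analytic continuation in $(r,s)$ extends the identity to the full stated range, since both sides of \eqref{eq:asc} are analytic there.
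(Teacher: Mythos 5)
Your strategy is exactly the paper's: multiply two copies of \eqref{eqGFHq} with the specialization $u_1=re^{i\theta}$, $v_1=se^{-i\theta}$, $u_2=re^{-i\theta}$, $v_2=se^{i\theta}$, integrate against the weight $\left(e^{2i\theta},e^{-2i\theta};q\right)_\infty$, evaluate the left side by the $t_3=t_4=0$ case of the Askey--Wilson integral \eqref{eqAWI} with $t_1=rz_1$, $t_2=sz_2$, and evaluate the right side term-by-term with the moment formula \eqref{eqqHmoments}; your Fubini/majorization remark is if anything more careful than the paper, which simply interchanges sum and integral. The sign factor your computation yields, $(-1)^{j}$ with $j=(m_1+n_2-n_1-m_2)/2$, is also precisely what the paper's derivation produces, so you match the paper on that point as well.

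There is, however, one step that fails as literally stated: you integrate over $[0,\pi]$ and claim that $\int_0^{\pi}e^{ik\theta}\left(e^{2i\theta},e^{-2i\theta};q\right)_\infty d\theta$ vanishes for odd $k$. It does not: the weight has only even Fourier modes, and an odd exponential integrated over the \emph{half} period does not kill them. For instance, at $q=0$ the weight is $4\sin^{2}\theta$ and $\int_0^{\pi}e^{i\theta}\,4\sin^{2}\theta\,d\theta=\tfrac{16i}{3}\neq0$; likewise \eqref{eqqHmoments} only covers even exponents $e^{2ij\theta}$, so it cannot be invoked for the odd terms. The gap is easily closed in either of two ways: (i) integrate over the full period $[-\pi,\pi]$, as the paper does, where the odd moments vanish outright and both sides simply acquire a factor $2$ (the Askey--Wilson integrand is even in $\theta$); or (ii) keep $[0,\pi]$ but pair the terms $(m_1,n_1)\leftrightarrow(m_2,n_2)$, which have equal coefficients and opposite exponents $\pm k$: since the weight is real, the odd half-period moments are purely imaginary, so each such pair contributes $2\operatorname{Re}\int_0^\pi e^{ik\theta}\left(e^{2i\theta},e^{-2i\theta};q\right)_\infty d\theta=0$. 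With either repair, your argument goes through and coincides with the paper's proof.
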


\begin{proof}
Multiply the generating functions \eqref{eqGFHq} with the $z$ variable being $z_1$, $z_2$, $z_1$, $z_2$ and set $u_1=re^{i\theta}$, $v_1=se^{-i\theta}$, $u_2=re^{-i\theta}$, $v_2=se^{i\theta}$. This gives
\begin{gather*}
\frac{(rs,rs;q)_\infty}
{\left(rz_1e^{i\theta},rz_1e^{-i\theta},sz_2e^{i\theta},sz_2e^{-i\theta};q\right)_\infty} \\
=\sum_{m_{1},m_{2},n_1,n_2=0}^{\infty}\frac{H_{m_{1},n_{1}}
\left(z_1,z_2\mid q\right)H_{m_{2},n_{2}}\left(z_1,z_2)\mid q\right)
 r^{m_1+m_2}s^{n_1+n_2}}{\left(q;q\right)_{m_1}
 \left(q;q\right)_{m_{2}}\left(q;q\right)_{n_{1}}\left(q;q\right)_{n_{2}}
 e^{i\theta\left(n_{1}+ m_2-m_{1}-n_{2}\right)}}.
\end{gather*}
Multiply the above generating function by $\left(e^{2i\theta},e^{-2i\theta};q\right)_\infty/(2\pi)$ and integrate over $\theta$ in $[-\pi,\pi]$ the apply the case $t_3=t_4=0$ of the Askey--Wilson integral \eqref{eqAWI} we find that
\begin{gather*}
2\,\frac{(rs,rs;q)_{\infty}}{\left(q;z_1z_2 rs;q\right)_{\infty}} \\
\begin{aligned}
 & =\sum_{m_{1},m_{2}=0}^{\infty}\sum_{n_{1},n_{2}=0}^{\infty}
 \frac{H_{m_{1},n_{1}}\left(z_1,z_2\mid q\right)H_{m_{2},n_{2}}
 \left(z_1,z_2\mid q\right)}{\left(q;q\right)_{m_{1}}
 \left(q;q\right)_{m_{2}}\left(q;q\right)_{n_{1}}
 \left(q;q\right)_{n_{2}}}\,r^{m_1+m_2}s^{n_1+n_2} \\
 &\quad\times \int_{-\pi}^{\pi}
 \frac{\left(e^{2i\theta},e^{-2i\theta};q\right)_{\infty}}
 {e^{i\theta\left(n_{1}+ m_2-m_{1}-n_{2}\right)}}\frac{d\theta}{2\pi},
\end{aligned}
\end{gather*}
which vanishes unless $m_2+n_1-m_1-n_2$ is even, in which case we get
\begin{gather*}
\frac{(rs,rs ;q)_{\infty}}{\left(q;z_1z_2rs;q\right)_{\infty}} \\
\begin{aligned}
&=\sum_{m_{1},m_{2}=0}^{\infty}\sum_{n_{1},n_{2}=0}^{\infty}
 \frac{H_{m_{1},n_{1}}\left(z_1,z_2\mid q\right)H_{m_{2},n_{2}}
 \left(z_1,z_2\mid q\right)}{\left(q;q\right)_{m_{1}}
 \left(q;q\right)_{m_{2}}\left(q;q\right)_{n_{1}}
 \left(q;q\right)_{n_{2}}}\,r^{m_1+m_2}s^{n_1+n_2} \\
&\quad\times\frac{\left(q^{\binom{\left(m_{1}+n_{2}-n_{1}-m_2\right)/2}{2}}
   +q^{\binom{(n_{1}+m_{2}-m_{1}-n_{2})/2}{2}}\right)}
   {\left(q;q\right)_{\infty}\left(-1\right)^{\left(n_{1}+n_{2}-m_{1}-m_{2}\right)/2}},
 \end{aligned}
\end{gather*}
where the summation is over all the nonnegative integers such that $m_{1}+m_{2}-n_{1}-n_{2}$ is even.
\end{proof}

\begin{thm}
Let $\left|x_{1}z_{1}\right|,\left|x_{1}z_{2}\right|,\left|x_{1}z_{3}\right|,\left|x_{1}z_{4}\right|<1$, then
\begin{gather}
\frac{\left(r_1s_1,r_1s_1,r_2s_2,r_2s_2,r_1s_1,r_2 s_2z_1z_2z_3z_4;q\right)_\infty}
{\left(r_1r_2z_1z_2,r_1r_2z_1z_3,r_1s_2z_1z_4,r_2s_1z_2z_3, s_1s_2z_2z_4,r_2s_2z_3z_4,q\right)_\infty} \\
\begin{aligned}
&=\sum_{m_j, n_j\ge 0, 1\le j\le 4}^\infty
    \frac{H_{m_{1},n_{1}}\left(z_1,z_2\right)\mid q)
    H_{m_2,n_2}\left(z_{1},z_2\mid q\right)}
    {\left(q;q\right)_{m_{1}}\left(q;q\right)_{n_1}\left(q;q\right)_{m_{2}}\left(q;q\right)_{n_2}}\,
    r_1^{m_1+m_2}r_2^{m_3+m_3}s_1^{n_1+n_2} s_2^{n_3+n_4} \\
&\quad\times\frac{H_{m_{3},n_{3}}\left(z_3,z_4\right)\mid q)
    H_{m_4,n_4}(z_{3},z_4|q)}
    {\left(q;q\right)_{m_{3}}\left(q;q\right)_{n_3}\left(q;q\right)_{m_{3}}\left(q;q\right)_{n_3}}
     (-1)^M\left[q^{\binom{M}{2}}+q^{\binom{-M}{2}}\right]
\end{aligned}
\label{eq:qks}
\end{gather}
where the summation is over all nonnegative integers $m_j,n_j$, $1\le j\le 4$ such that $m_1+n_2+m_3+n_4-n_1-m_2-n_3-m_4$ is even and $=2M$.
\end{thm}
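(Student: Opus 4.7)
The plan is to generalize the proof of Theorem \ref{thm:asc}, this time combining \emph{four} copies of the generating function \eqref{eqGFHq} so as to manufacture the full four-parameter Askey--Wilson integrand in \eqref{eqAWI}. First I would specialize \eqref{eqGFHq} four times, the first two copies taken in the variables $(z_{1},z_{2})$ and the last two in $(z_{3},z_{4})$, with the respective substitutions
\[
(u,v)=(r_{1}e^{i\theta},s_{1}e^{-i\theta}),\ (r_{1}e^{-i\theta},s_{1}e^{i\theta}),\ (r_{2}e^{i\theta},s_{2}e^{-i\theta}),\ (r_{2}e^{-i\theta},s_{2}e^{i\theta}).
\]
Multiplying the four identities term by term produces
\[
\frac{(r_{1}s_{1},r_{1}s_{1},r_{2}s_{2},r_{2}s_{2};q)_{\infty}}{\prod\limits_{t\in\{r_{1}z_{1},\,s_{1}z_{2},\,r_{2}z_{3},\,s_{2}z_{4}\}}(te^{i\theta},te^{-i\theta};q)_{\infty}}
\]
on the left, while on the right the total exponent of $e^{i\theta}$ carried by the coefficient of $H_{m_{1},n_{1}}H_{m_{2},n_{2}}H_{m_{3},n_{3}}H_{m_{4},n_{4}}$ is exactly $m_{1}-n_{1}-m_{2}+n_{2}+m_{3}-n_{3}-m_{4}+n_{4}$, i.e.\ $-2M$ in the notation of the theorem.

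Second, I would multiply both sides by $(e^{2i\theta},e^{-2i\theta};q)_{\infty}/(2\pi)$ and integrate over $\theta\in[-\pi,\pi]$. Under the stated smallness hypothesis on the products $r_{i}z_{j}$ and $s_{i}z_{j}$, the Askey--Wilson integrand is integrable and Fubini's theorem justifies interchanging integration with the eightfold sum on the right. On the left, the Askey--Wilson integral \eqref{eqAWI} applied with $(t_{1},t_{2},t_{3},t_{4})=(r_{1}z_{1},s_{1}z_{2},r_{2}z_{3},s_{2}z_{4})$ evaluates the integral and yields the numerator factor $(r_{1}s_{1}r_{2}s_{2}z_{1}z_{2}z_{3}z_{4};q)_{\infty}$ together with the six bilinear $(t_{j}t_{k};q)_{\infty}$ factors that appear in the denominator of \eqref{eq:qks}.

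Third, on the right I would apply the trigonometric moment formula \eqref{eqqHmoments} with $j=-M$. The moment vanishes unless the exponent of $e^{i\theta}$ is even, which is precisely the parity constraint stated in the theorem; and when it does not vanish, it equals $(-1)^{M}[q^{\binom{M}{2}}+q^{\binom{-M}{2}}]/(q;q)_{\infty}$, using $(-1)^{-M}=(-1)^{M}$. Cancelling the common $(q;q)_{\infty}^{-1}$ against the Askey--Wilson output on the left then gives exactly \eqref{eq:qks}.

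I expect the only obstacle to be bookkeeping: verifying that the six products $t_{j}t_{k}$ match the six bilinear factors displayed in the denominator of \eqref{eq:qks} under the correct pairing of the letters $r_{i}$, $s_{i}$, $z_{j}$, and confirming that the parity condition coming from \eqref{eqqHmoments} coincides with the one imposed on the summation indices. No new analytic ingredient beyond those already used in the proof of Theorem \ref{thm:asc} is required.
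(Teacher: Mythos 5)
Your proposal follows essentially the same route as the paper's proof: the same four specializations of \eqref{eqGFHq} with $(u_j,v_j)$ built from $r_1,s_1,r_2,s_2$ and $e^{\pm i\theta}$, multiplication by the weight $\left(e^{2i\theta},e^{-2i\theta};q\right)_\infty/(2\pi)$ and integration, the Askey--Wilson integral \eqref{eqAWI} with $t_1=r_1z_1$, $t_2=s_1z_2$, $t_3=r_2z_3$, $t_4=s_2z_4$ on the left, and the moment formula \eqref{eqqHmoments} term by term on the right. The only slip is that the exponent of $e^{i\theta}$ equals $+2M$ rather than $-2M$, which is immaterial since \eqref{eqqHmoments} is invariant under $j\mapsto-j$; note also that your bilinear factors (e.g.\ $t_1t_2=r_1s_1z_1z_2$) are the correct ones, any mismatch with the printed denominator of \eqref{eq:qks} being a typo in the paper's statement.
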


\begin{proof}
Again we start with four cases of the generating function \eqref{eqGFHq} with parameters $\left(u_j,v_j\right)$, $1\le j\le 4$ and variables $z_1$, $z_2$, $z_1$, $z_2$, $z_3$, $z_4$, $z_3$, $z_4$, where $u_1=r_1e^{i\theta}$, $v_1=s_1e^{-i\theta}$, $u_2=r_1e^{-i\theta}$, $v_2=s_1e^{i\theta}$, $u_3=r_2e^{i\theta}$, $v_3=s_2e^{-i\theta}$, $u_4=r_2e^{-i\theta}$, $v_4=s_2 e^{i\theta}$. We multiply the four right-hand sides of \eqref{eqGFHq} by $\left(e^{2i\theta},e^{-2i\theta};q\right)_\infty/(2\pi)$ and integrate over $[-\pi,\pi]$. The use of the Askey--Wilson integral shows that the result is
\begin{eqnarray}
\frac{2\; (r_1s_1, r_1s_1, r_2 s_2, r_2s_2, r_1s_1, r_2 s_2z_1z_2z_3z_4;q)_\infty}{(q, r_1r_2z_1z_2,r_1r_2z_1z_3, r_1s_2z_1z_4, r_2s_1z_2z_3, s_1s_2z_2z_4, r_2s_2z_3z_4,q)_\infty}.
\notag
\end{eqnarray}
The rest of the proof is similar to the proof of Theorem \ref{thm:asc} and will be omitted.
\end{proof}




\section*{References}
\bibliographystyle{plainnat}
\bibliography{ismail-zhang-AAM}





\end{document}